\documentclass[12pt]{article}
\usepackage{amsmath,amssymb,amstext,dsfont,fancyvrb,float,fontenc,graphicx,caption,subcaption,theorem,hyperref}
\allowdisplaybreaks

\usepackage[utf8]{inputenc}

\usepackage{tikz,everypage}

\usepackage[letterpaper]{geometry}
\ifx\volno\undefined\def\volno{0}\fi
\ifx\volyear\undefined\def\volyear{2017}\fi
\ifx\pagno\undefined\def\pagno{000--000}\fi

\newfont{\footsc}{cmcsc10 at 8truept}
\newfont{\footbf}{cmbx10 at 8truept}
\newfont{\footrm}{cmr10 at 10truept}

\usepackage{fancyhdr}
\usepackage{relsize}
\usepackage{sectsty}
\allsectionsfont{\larger[-1]} 

\renewcommand\paragraph{\@startsection{paragraph}{4}{\z@}
                                    {2ex \@plus.5ex \@minus.2ex}
                                    {-1em}
                                    {\normalfont\normalsize\bfseries}}

\renewcommand\subparagraph{\@startsection{subparagraph}{5}{\parindent}
                                       {2ex \@plus.5ex \@minus .2ex}
                                       {-1em}
                                      {\normalfont\normalsize\bfseries}}

\usepackage[strict]{changepage}
\def\abstractname{Abstract -}   
\def\abstract{\begin{adjustwidth}{1cm}{1cm} \par    \footnotesize \noindent {\bf \abstractname} 
\def\endabstract{ \end{adjustwidth} \smallskip }}

{\theorembodyfont{\itshape}\newtheorem{theorem}{Theorem}[section]}
{\theorembodyfont{\itshape}}
{\theorembodyfont{\itshape}\newtheorem{definition}[theorem]{Definition}}
{\theorembodyfont{\itshape}\newtheorem{lemma}[theorem]{Lemma}}
{\theorembodyfont{\itshape}}
{\theorembodyfont{\rm}}
{\theorembodyfont{\rm}}
{\theorembodyfont{\rm}}
{\theorembodyfont{\rm }}

\title{\Large\bf Structure Constant Formulas for the Universal Enveloping Algebras of the Nilpotent Lie Algebras of Dimension Five and Less}
\author{\sc S. Chamberlin and E. Taylor }

\begin{document}
\setcounter{page}{1}
\maketitle
\thispagestyle{fancy}

\vskip 1.5em

\begin{abstract}
Libor \v{S}nobl and Pavel Winternitz classified all of the Lie algebras of dimension six and smaller. Using this classification, we formulated and proved structure constant formulas for the universal enveloping algebras of the nilpotent Lie algebras of dimension five and less.
\end{abstract}
 
\begin{keywords}
Lie algebras; structure constants; universal enveloping algebras

\end{keywords}

\begin{MSC}
16S30; 17B05; 17B30; 17B35

\end{MSC}

\section{Introduction} 

In 2014, Libor \v{S}nobl and Pavel Winternitz classified all of the Lie algebras of dimension six and smaller, see \cite{SW}. For background on Lie algebras, see \cite{Carter}. Given a Lie algebra $L$, the  Poincaré-Birkhoff-Witt (PBW) Theorem implies that every element of the universal enveloping algebra of $L$, $\mathbf{U}(L)$, can be written uniquely as a linear combination of products of basis elements of $L$ in any fixed order. The coefficients of the resulting linear combination are referred to as the structure constants. 

To get the structure constants, we make use of straightening identities, or formulas for reordering noncommutative products within the universal enveloping algebra of a Lie algebra. Because of the noncommutative nature of Lie algebras, straightening identities are well used and well studied, see \cite{CN,CPR,CRSS,Kostant}. 

Structure constants for particular Lie algebras have been studied before. In 2020, Gourley and Kennedy derived a recursive formula for the structure constants of $\mathbf{U}(\mathfrak{sl}_2)$, see \cite{GK}. Then in 2022, using Kostant's straightening identity, see \cite{Kostant}, Chamberlin and Fernelius provided a closed formula for the structure constants of $\mathbf U ( \mathfrak{sl}_2 )$, see \cite{CF}. In a slightly different manner, Bremner, Hentzel, Peresi, and Usefi determined the structure constants for the universal enveloping algebra of the non-Lie four dimensional Malcev algebra, see \cite{BHPU}.

\section{Preliminaries}

Let $\mathbb F= \mathbb C \text{ or } \mathbb R$, $\mathbb Z_{\ge0}$ be the set of nonnegative integers, and $\mathbb N$ be the set of positive integers. 

\subsection{Lie Algebras}

\begin{definition} 
	A vector space, $L$, over $\mathbb F$, with a bilinear operator $[.,.]$ called the \textbf{Lie bracket} (this implies that if $x,y\in L$, then $[x,y]\in L$) is called a \textbf{Lie algebra} if the following axioms are satisfied for all $x,y,z\in L$:
	\begin{eqnarray}
	\left[x,x\right]&=&0\\
	\left[x,[y,z]\right]+[y,[z,x]]+[z,[x,y]]&=&0
	\end{eqnarray}
\end{definition}

This first axiom is the alternating property. The second axiom is the Jacobi identity. Through this bilinearity and the alternating property, we can derive that $[x,y]=-[y,x]$. This implies that the Lie bracket is skew-symmetric, or has the anticommutative property. 

\subsection{Nilpotency}
\begin{definition}
    The \textbf{derived Lie algebra} of the Lie algebra $L$ is the subalgebra, denoted $[L,L]$, that consists of all linear combinations of Lie brackets of pairs of elements of $L$.
\end{definition}

\begin{definition}
    The \textbf{lower central series} of the Lie algebra $L$ is the sequence of subalgebras
    $$L\supseteq[L,L]\supseteq[L,[L,L]]\supseteq[L,[L,[L,L]]]\supseteq\cdots$$
    If we write $L_1:=L$ and $L_n=[L,L_{n-1}]$ for $n\geq 2$, then the lower central series becomes
    $$L_1\supseteq L_2\supseteq L_3\supseteq\cdots$$
\end{definition}

\begin{definition}
    A Lie algebra is \textbf{nilpotent} if its lower central series eventually arrives at the zero algebra. Thus $L$ is nilpotent if $L_n=0$ for some $n\in\mathbb N$.
\end{definition}

\begin{theorem}[Engel's Theorem]\label{engels}
    A finite dimensional Lie algebra $L$ is nilpotent if 
    $$[x,[x,[\cdots[x,[x,y]]\cdots]]]=0$$
    for all $x,y\in L$. In other words, repeated bracketing of $x$ on $y$ eventually gives 0.
\end{theorem}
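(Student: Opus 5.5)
The plan is the standard proof of Engel's Theorem. First restate the hypothesis: for each $x\in L$ the operator $\mathrm{ad}\,x: y\mapsto [x,y]$ is a nilpotent endomorphism of the finite dimensional space $L$. The core step is a linear-algebra statement.

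\textbf{Key lemma.} If $V\neq 0$ is finite dimensional and $\mathfrak g\subseteq \mathfrak{gl}(V)$ is a Lie subalgebra consisting of nilpotent endomorphisms, then there is a nonzero $v\in V$ with $X v=0$ for all $X\in\mathfrak g$.

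Once the lemma holds, apply it repeatedly. Passing to $V/\mathbb F v$ and inducting on $\dim V$ gives a flag
$$0=V_0\subset V_1\subset\cdots\subset V_n=V$$
with $\mathfrak g V_i\subseteq V_{i-1}$.

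I will prove the lemma by induction on $\dim\mathfrak g$. The case $\dim\mathfrak g\le 1$ is immediate, since a single nilpotent operator has a nonzero kernel.

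For the inductive step, let $K$ be a maximal proper subalgebra of $\mathfrak g$. For $X\in\mathfrak g$, the operator $\mathrm{ad}\,X$ on $\mathfrak{gl}(V)$ is $L_X-R_X$, where left and right multiplication by $X$ commute and are both nilpotent. Hence $\mathrm{ad}\,X$ is nilpotent on $\mathfrak{gl}(V)$.

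Consequently $K$ acts on $\mathfrak g/K$ by nilpotent operators. The image of $K$ in $\mathfrak{gl}(\mathfrak g/K)$ has dimension less than $\dim\mathfrak g$, so the induction hypothesis gives a nonzero class $Y+K$ killed by $K$. This means $[K,Y]\subseteq K$. Therefore $K+\mathbb F Y$ is a subalgebra strictly larger than $K$, so by maximality it equals $\mathfrak g$, and $K$ is an ideal of codimension one.

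By induction, $W=\{v\in V: Kv=0\}$ is nonzero. This $W$ is $Y$-stable, because for $k\in K$ and $w\in W$,
$$kYw=Ykw+[k,Y]w=0.$$
Since $Y$ is nilpotent on $W$, it has a nonzero kernel vector $v$ there. Then $v$ is killed by $K+\mathbb F Y=\mathfrak g$.

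To finish, apply the flag to $\mathfrak g=\mathrm{ad}\,L\subseteq\mathfrak{gl}(L)$, which consists of nilpotent operators by hypothesis. We get a basis of $L$ in which every $\mathrm{ad}\,x$ is strictly upper triangular.

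Now show by induction on $m$ that the lower central series satisfies $L_m\subseteq V_{n-m+1}$:
\begin{itemize}
\item For $m=1$ this reads $L_1=L=V_n$.
\item If $L_{m-1}\subseteq V_{n-m+2}$, then $L_m=[L,L_{m-1}]=\mathrm{ad}(L)L_{m-1}\subseteq V_{n-m+1}$.
\end{itemize}
Taking $m=n+1$ gives $L_{n+1}\subseteq V_0=0$, so $L$ is nilpotent.

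The main obstacle is the inductive step of the key lemma: showing that a maximal proper subalgebra is a codimension-one ideal, and that its common kernel $W$ is stable under the complementary element $Y$. The load-bearing facts there are that $\mathrm{ad}\,X$ is nilpotent on $\mathfrak{gl}(V)$ and the commutator identity used for $W$.
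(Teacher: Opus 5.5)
Your proof is correct. It is the standard textbook argument:
\begin{enumerate}
\item a Lie algebra of nilpotent operators has a common null vector, proved by showing that a maximal proper subalgebra is a codimension-one ideal;
\item this yields a flag on which $\mathrm{ad}\,L$ acts strictly upper triangularly;
\item from the flag, $L_m\subseteq V_{n-m+1}$.
\end{enumerate}
The paper itself gives no proof of Engel's Theorem. It is quoted as a standard background fact and is never used in the later straightening-identity and structure-constant computations. So there is no argument in the paper to compare yours with; your proposal supplies a complete proof that works over any field. Your shortcut of noting directly that $K+\mathbb F Y$ is a subalgebra (because $[K,K]+[K,Y]\subseteq K$) shows in one step that $K$ is an ideal of codimension one. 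This replaces the usual normalizer-then-preimage argument.

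One small point to tighten is the first sentence. The hypothesis as stated is pointwise: for each pair $x,y$, some number of brackets by $x$ kills $y$. You immediately use that $\mathrm{ad}\,x$ is a nilpotent operator, which needs a one-line finite-dimensionality argument. If $y_1,\dots,y_n$ is a basis of $L$ and $(\mathrm{ad}\,x)^{N_i}y_i=0$, then $(\mathrm{ad}\,x)^{N}=0$ with $N=\max_i N_i$. With that noted, the argument is complete.
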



\subsection{The Universal Enveloping Algebra}



\begin{definition}
    Given a Lie algebra, $L$, the Universal Enveloping Algebra of $L$, $\mathbf{U}(L)$, is the unique unital associative algebra over $\mathbb F$ such that there is a linear function $\iota:L\rightarrow \mathbf{U}(L)$, such that for all $x,y\in L$
    \begin{equation}\label{commutator}
        \iota([x,y])=\iota(x)\iota(y)-\iota(y)\iota(x)
    \end{equation}
    and the following holds: for any unital associative algebra, $A$, over $\mathbb F$ and any linear function $\sigma:L\rightarrow A$ satisfying \eqref{commutator}, there exists a unique algebra homomorphism $\phi:\mathbf{U}(L)\rightarrow A$ such that $\phi(1)=1$ and $\phi\circ\iota=\sigma$
\end{definition}

Informally, $\mathbf{U}(L)$ is made up of all linear combinations over $\mathbb F$ of all formal products of the basis elements of $L$ in any order, such that for all $x,y\in L$, $xy=yx+[x,y]$ in $\mathbf{U}(L)$. This relation essentially measures how much two elements fail to commute: i.e. if $[x,y]=0$, then $xy=yx$ in $\mathbf{U}(L)$.

\begin{definition}
    Given any algebra $A$, over any field, with basis $B=\{e_i\}$ the \textbf{structure constants} of $A$ with respect to the basis $B$ are the scalars $c_{i,j}^k$ such that
    \begin{equation*}
        e_ie_j=\sum_kc_{i,j}^ke_k
    \end{equation*}
\end{definition}

Therefore, in order to obtain the structure constants, we must reorder the product of two general basis elements of $\mathbf{U}(L)$. We will find the structure constants for $\mathbf{U}(L)$, where $L$ is nilpotent, with dimension less than or equal to five.



\begin{theorem}[The  Poincar\'e-Birkhoff-Witt (PBW) Theorem]
    Let $L$ be a Lie algebra with ordered basis $\{x_1,\ldots,x_m\}$. Then the ordered products
    $$\left\{x_{1}^{r_1}\dots x_{m}^{r_m}\ |\ r_1,\ldots,r_m\in\mathbb Z_{\ge0}\right\}$$
    form a basis for $\mathbf{U}(L)$.
\end{theorem}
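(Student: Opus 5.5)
The plan has two halves: show that the ordered monomials span $\mathbf{U}(L)$, and show that they are linearly independent. For spanning, I will use the informal description of $\mathbf{U}(L)$ as the span of all words in $x_1,\ldots,x_m$, and induct on word length $n$. For a word of length $n$, also induct on the number of inversions, meaning pairs of positions $p<q$ whose indices satisfy $i_p>i_q$. If a word is not ordered, some adjacent pair $x_ix_j$ has $i>j$. Rewrite it using $x_ix_j=x_jx_i+[x_i,x_j]$. The first term has one fewer inversion. The second term, after expanding $[x_i,x_j]$ in the basis, is a combination of words of length $n-1$. Both are handled by the induction hypotheses, so every word is a linear combination of ordered monomials $x_1^{r_1}\cdots x_m^{r_m}$.

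Independence is the real content, and I expect it to be the main obstacle. The standard route is to build a representation of $\mathbf{U}(L)$ on the polynomial ring $P=\mathbb F[z_1,\ldots,z_m]$. The aim is linear maps $\sigma(x_i):P\to P$ satisfying three conditions:
\begin{itemize}
\item[(a)] $\sigma(x_i)z_M=z_iz_M$ whenever $i\le M$, where $M=(j_1\le\cdots\le j_k)$ is a nondecreasing multi-index and $z_M=z_{j_1}\cdots z_{j_k}$;
\item[(b)] $\sigma(x_i)z_M\equiv z_iz_M$ modulo polynomials of degree at most $k$;
\item[(c)] $\sigma(x_i)\sigma(x_j)-\sigma(x_j)\sigma(x_i)=\sigma([x_i,x_j])$ on all of $P$.
\end{itemize}
I will construct $\sigma$ on the filtration $P_k$ (degree at most $k$) by induction on $k$. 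When $i\le M$ the value is forced by (a). Otherwise write $M=(j,N)$ with $j<i$ and $j\le N$, and define
$$\sigma(x_i)z_M=z_jz_iz_N+\sigma(x_j)\bigl(\sigma(x_i)z_N-z_iz_N\bigr)+\sigma([x_i,x_j])z_N.$$
The right-hand side is already defined by induction, because $\sigma(x_i)z_N-z_iz_N$ lies in $P_{k-1}$ by (b).

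The delicate step is checking (c) on $P_k$. This reduces to the case $j<i$ acting on $z_N$ with $l\le N$. The first trouble comes from the Jacobi identity, where one must verify cyclic sums of the form
$$\bigl[\sigma(x_i),[\sigma(x_j),\sigma(x_l)]\bigr]+\cdots=\sigma\bigl([x_i,[x_j,x_l]]+\cdots\bigr)=0$$
on lower-degree terms. The second trouble is the bookkeeping over the possible orderings of $i,j,l$. I would follow the standard argument (as in Humphreys or Jacobson) closely here.

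Once $\sigma$ exists, the universal property in the definition of $\mathbf{U}(L)$, applied with $A=\operatorname{End}(P)$, gives a homomorphism $\phi$ with $\phi\circ\iota=\sigma$. By (a), applying the image of an ordered monomial $x_1^{r_1}\cdots x_m^{r_m}$ to $1$ gives $z_1^{r_1}\cdots z_m^{r_m}$: act from the right, where each step is a case of (a). Distinct ordered monomials therefore map to distinct monomials of $P$, which are linearly independent. Hence any linear relation among the ordered monomials in $\mathbf{U}(L)$ must be trivial, and together with spanning this shows they form a basis.
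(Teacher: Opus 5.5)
The paper does not prove this statement. PBW is quoted as standard background and used only to guarantee that the ordered monomials form a basis, so that the straightened expansions, and hence the structure constants, are unique. There is no argument in the paper to compare against, so your proposal has to be judged on its own terms.

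Your proposal is the standard textbook proof: straightening adjacent inversions for spanning, and the representation on $\mathbb F[z_1,\ldots,z_m]$ (as in Humphreys or Jacobson) for independence. The outline is sound:
\begin{itemize}
\item The spanning induction is correct. With the paper's universal-property definition, the fact that words span $\mathbf{U}(L)$ follows by applying the uniqueness clause to the subalgebra generated by $\iota(L)$.
\item Your recursive definition of $\sigma(x_i)z_M$ is unambiguous, because $j=j_1$ is forced.
\item Condition (b) propagates immediately.
\item Evaluating ordered monomials on $1$ does give independence.
\end{itemize}

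The step you leave undone is the verification of (c), which is the only one with real content, and your description of it is slightly off. At the stage where $\sigma$ is defined on $P_k$, (c) can only be checked on $P_{k-1}$, since the composite needs $\sigma(x_i)$ on $P_k$. Also, no case analysis over orderings of three indices is needed.

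Here is how it goes. Take a monomial $z_T$ of degree $k-1$ and $i\ne j$.
\begin{itemize}
\item If $i\le T$ or $j\le T$, then (c) is exactly your defining formula. Rewrite $z_jz_iz_T=\sigma(x_j)(z_iz_T)$ by (a), and use skew-symmetry if $i<j$.
\item Otherwise $T=(l,N)$ with $l<i$, $l<j$, $l\le N$, so $z_T=\sigma(x_l)z_N$.
\end{itemize}
In the second case, use (c) on $P_{k-2}$ and split $\sigma(x_j)z_N=z_jz_N+w$ with $w\in P_{k-2}$. Then apply the case just settled to $\sigma(x_i)\sigma(x_l)(z_jz_N)$, noting that $l\le (j,N)$. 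This gives
\[
\sigma(x_i)\sigma(x_j)z_T=\sigma(x_l)\sigma(x_i)\sigma(x_j)z_N+\sigma([x_i,x_l])\sigma(x_j)z_N+\sigma(x_i)\sigma([x_j,x_l])z_N .
\]
Antisymmetrizing in $i,j$ and using (c) on $P_{k-2}$ again gives
\[
[\sigma(x_i),\sigma(x_j)]z_T=\sigma(x_l)\sigma([x_i,x_j])z_N+\sigma\bigl([[x_i,x_l],x_j]+[x_i,[x_j,x_l]]\bigr)z_N .
\]
On the other side, $\sigma([x_i,x_j])z_T=\sigma(x_l)\sigma([x_i,x_j])z_N+\sigma([[x_i,x_j],x_l])z_N$. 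The two agree precisely by the Jacobi identity in $L$; this is where Jacobi enters, not as an operator identity, which holds automatically in $\operatorname{End}(P)$. With this filled in, your proposal is a complete proof.
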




The PBW Theorem implies that we can reorder products of basis elements of $L$, resulting in a unique linear combination of products of these elements, in a specified order where the coefficients are the structure constants.

\subsection{Straightening Process}

\begin{definition}
    Define the Heisenberg Lie algebra, denoted $\mathfrak n_{3,1}$, to be the vector space with ordered basis $\{x_1,x_2,x_3\}$ where the only nonzero Lie bracket among the basis elements is $[x_3,x_2]=-x_1$
        
\end{definition}

By the PBW Theorem a basis for $\mathbf{U}(\mathfrak n_{3,1})$ is given by $x_1^jx_2^kx_3^l$ where $j,k,l\in\mathbb Z_{\ge0}$. Therefore, in order to find the structure constants for $\mathbf{U}(\mathfrak n_{3,1})$, we need to rewrite  the product $(x_1^jx_2^kx_3^l)(x_1^mx_2^nx_3^p)$ where $j,k,l,m,n,p\in\mathbb Z_{\ge0}$ as a linear combination of products in the PBW order $x_1^ux_2^vx_3^w$. Such a formula is called a \textbf{straightening identity}. 

Since $x_1$ commutes with everything we have,
\begin{equation*}
    (x_1^jx_2^kx_3^l)(x_1^mx_2^nx_3^p)=x_1^{j+m}x_2^kx_3^lx_2^nx_3^p=x_1^{j+m}x_2^k(x_3^lx_2^n)x_3^p
\end{equation*}
We will use the following straightening identity to straighten $x_3^lx_2^n$.

In 2024, Chamberlin, Peck, and Rafizadeh in \cite{CPR} formulated the following straightening identity within the universal enveloping algebra of the Heisenberg algebra.  

\begin{theorem}[Chamberlin, Peck, and Rafizadeh Theorem]{\label{CPR}}
    Given a Lie algebra $L$ and $a,b,c\in L$, with $[a,b]=c$, $[a,c]=0$, and $[b,c]=0$, then for all $r,s\in\mathbb Z_{\ge 0}$:
    \begin{equation*}
    a^{r}b^{s}=\sum_{j=0}^{\min\{r,s\}}\binom{r}{j}\binom{s}{j}j!c^{j}b^{s-j}a^{r-j}
    \end{equation*}
\end{theorem}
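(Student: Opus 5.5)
We argue by induction, in two stages: first for $s=1$ with $r$ arbitrary, then for general $s$ by induction on $s$. Throughout, $c$ is central in the subalgebra generated by $a,b,c$, since $[a,c]=[b,c]=0$ gives $ac=ca$ and $bc=cb$ in $\mathbf U(L)$. This lets us move powers of $c$ freely, and it is why the formula can place $c^j$ at the front.

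\textbf{Stage 1.} We prove $a^r b = b a^r + r c a^{r-1}$ for $r\ge 1$, the case $s=1$ of the statement. Induct on $r$. The case $r=1$ is $ab=ba+[a,b]=ba+c$. For the inductive step, write
\[
a^{r+1}b=a(ba^r+rca^{r-1}).
\]
Replace $ab$ by $ba+c$ and use $ac=ca$. This gives $ba^{r+1}+ca^r+rca^r$, as required.

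\textbf{Stage 2.} Induct on $s$, with $r$ arbitrary. The case $s=0$ is trivial. Assume the formula holds for $s$ and write $a^r b^{s+1}=(a^r b^s)b$. Apply the inductive hypothesis to $a^r b^s$. Each term then ends in $c^jb^{s-j}a^{r-j}b$. Apply Stage 1 to $a^{r-j}b$ to get
\[
a^{r-j}b=ba^{r-j}+(r-j)ca^{r-j-1}.
\]
Collect everything using the centrality of $c$. The coefficient of $c^jb^{s+1-j}a^{r-j}$ is
\[
\binom{r}{j}\binom{s}{j}j!+\binom{r}{j-1}\binom{s}{j-1}(j-1)!\,(r-j+1).
\]

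\textbf{Key identity.} It remains to show this equals $\binom{r}{j}\binom{s+1}{j}j!$. Use $\binom{r}{j-1}(r-j+1)=\binom{r}{j}j$. The second term then becomes $\binom{r}{j}\binom{s}{j-1}j!$. Now Pascal's rule $\binom{s}{j}+\binom{s}{j-1}=\binom{s+1}{j}$ finishes the computation.

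\textbf{Summation range.} The main point of care is the boundary of the sum. We adopt the convention that binomial coefficients vanish outside their natural range. Then the sum may be taken over all $j\ge 0$, and the upper limit $\min\{r,s\}$ (becoming $\min\{r,s+1\}$ after the step) is handled automatically. The terms with $j=r$ contribute no Stage 1 correction, because the factor $(r-j)$ vanishes.
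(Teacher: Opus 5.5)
Your proof is correct and complete. Stage 1 is right. Commuting $c$ past $b^{s-j}$ and shifting $j\mapsto j-1$ in the correction terms gives exactly the coefficient you write. The identity $\binom{r}{j-1}(r-j+1)=j\binom{r}{j}$ together with Pascal's rule closes the inductive step. Your vanishing convention handles the boundary terms, including $r-j=0$, where the Stage 1 correction carries the factor $0$.

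The paper itself gives no argument for this theorem. It cites Section 3 of \cite{CPR} and only remarks that that proof applies to any $a,b,c$ with these brackets. Your argument therefore makes the result self-contained. It is the same double-induction scheme the paper uses for its own straightening lemmas: first the single-$b$ case by induction on the power of $a$, then induction on the power of $b$. In fact the statement is the $d=0$ case of Lemma~\ref{[a,b]=c, [a,c]=d}, whose proof does not depend on this theorem.

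The only real difference is bookkeeping. The paper works with divided powers, where the identity reads $a^{(r)}b^{(s)}=\sum_j c^{(j)}b^{(s-j)}a^{(r-j)}$ and the inductive step collapses to the count $(s+1-j)+j=s+1$. With ordinary powers you must invoke the two binomial identities explicitly. The divided-power form scales more cleanly to the multi-index lemmas later in the paper, but for this statement both routes are equally short.
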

Note that the proof for Theorem \ref{CPR}, seen in Section 3 of \cite{CPR}, holds for any $a,b,c$ in a Lie algebra with the given brackets.

Now we can get the structure constants for $\mathbf{U}(\mathfrak n_{3,1})$.

\begin{theorem}\label{n3,1} For all $j,k,l,m,n,p\in\mathbb Z_{\ge0}$, we have

\begin{equation*}
    \left(x_1^jx_2^kx_3^l\right)\left(x_1^mx_2^nx_3^p\right)=\sum_{\alpha=0}^{\min\{l,n\}}\binom{l}{\alpha}\binom{n}{\alpha}\alpha!(-1)^{\alpha}x_1^{j+m+\alpha}x_2^{k+n-\alpha}x_3^{l+p-\alpha}
\end{equation*}
\end{theorem}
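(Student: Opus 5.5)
We start from the reduction already recorded before the statement: since $x_1$ is central in $\mathfrak n_{3,1}$, it commutes with every element of $\mathbf U(\mathfrak n_{3,1})$. Hence
\[
\left(x_1^jx_2^kx_3^l\right)\left(x_1^mx_2^nx_3^p\right)=x_1^{j+m}x_2^k\left(x_3^lx_2^n\right)x_3^p ,
\]
and the only remaining task is to straighten the middle factor $x_3^lx_2^n$.

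For this we apply Theorem \ref{CPR} with $a=x_3$, $b=x_2$ and $c=[x_3,x_2]=-x_1$. The hypotheses hold: $[a,c]=-[x_3,x_1]=0$ and $[b,c]=-[x_2,x_1]=0$, because the only nonzero bracket among basis elements is $[x_3,x_2]$ (and $[x_2,x_3]=x_1$ by skew-symmetry). The theorem then gives
\[
x_3^lx_2^n=\sum_{\alpha=0}^{\min\{l,n\}}\binom{l}{\alpha}\binom{n}{\alpha}\alpha!\,(-x_1)^{\alpha}x_2^{n-\alpha}x_3^{l-\alpha}.
\]
We expand $(-x_1)^\alpha=(-1)^\alpha x_1^\alpha$, which is legitimate since $-1$ is a scalar.

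Next we substitute this expansion back into the product and distribute using bilinearity of multiplication in $\mathbf U(\mathfrak n_{3,1})$. In each summand we move $x_1^\alpha$ to the left past $x_2^k$, which is allowed because $x_1$ is central, and combine it with $x_1^{j+m}$. The powers of $x_2$ combine as $x_2^kx_2^{n-\alpha}=x_2^{k+n-\alpha}$, and the powers of $x_3$ as $x_3^{l-\alpha}x_3^p=x_3^{l+p-\alpha}$. This yields exactly the stated formula, and every term is in PBW order $x_1^ux_2^vx_3^w$.

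The only point requiring care is the sign convention. The paper defines the bracket as $[x_3,x_2]=-x_1$, so $c=-x_1$ rather than $x_1$, and this is what produces the factor $(-1)^\alpha$. Everything else is routine bookkeeping.
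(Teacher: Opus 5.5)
Your proposal is correct and follows essentially the same route as the paper: pull $x_1^{j+m}$ to the front using centrality, straighten $x_3^lx_2^n$ via Theorem~\ref{CPR} with $c=[x_3,x_2]=-x_1$ (which produces the $(-1)^\alpha$), and recombine the powers. You also explicitly check the hypotheses $[a,c]=[b,c]=0$, which the paper leaves implicit.
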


\begin{proof}
    \begin{eqnarray*}
    \left(x_1^jx_2^kx_3^l\right)\left(x_1^mx_2^nx_3^p\right)
    &=&x_1^{j+m}x_2^k(x_3^lx_2^n)x_3^p\\
    &=&x_1^{j+m}x_2^k\left(\sum_{\alpha=0}^{\min\{l,n\}}\binom{l}{\alpha}\binom{n}{\alpha}\alpha!(-1)^{\alpha}x_1^{\alpha}x_2^{n-\alpha}x_3^{l-\alpha}\right)x_3^p\\
    &&\text{by Theorem }\ref{CPR}\\
    &=&\sum_{\alpha=0}^{\min\{l,n\}}\binom{l}{\alpha}\binom{n}{\alpha}\alpha!(-1)^{\alpha}x_1^{j+m+\alpha}x_2^{k+n-\alpha}x_3^{l+p-\alpha}
\end{eqnarray*}
\end{proof}

For the remaining structure constant calculations, we needed a few straightening identities of our own. 

\section{Necessary Straightening Identities}

The proofs of our lemmas require the use of double induction. This technique is used to prove a property $P(r, s)$ depending on two natural numbers $r$ and $s$. It involves first proving the base case $P(1, 1)$, then proving the implication $P(r, 1) \implies P(r + 1, 1)$ for some $r\in\mathbb N$ and finally proving the implication $P(r, s) \implies P(r, s + 1)$ for some $s\in\mathbb N$ where $r$ is general in $\mathbb N$.\\

We will use divided powers to simplify the proofs. 

\begin{definition}
    Let $L$ be a Lie algebra, with $u\in\mathbf {U}(L)$, and $k\in\mathbb Z$. Define the \textbf{$kth$ divided power} of an element in $\mathbf{U}(L)$ as follows: $u^{(k)}=0$ if $k<0$ and 

    \begin{equation*}
        u^{(k)}=\frac{u^{k}}{k!}
    \end{equation*}
    for $k\ge 0$.
\end{definition}

The following divided power version of Theorem \ref{CPR} is used in our proofs.

\begin{equation}\label{CPRdiv}
    a^{(r)}b^{(s)}=\sum_{j=0}^{\min\{r,s\}}c^{(j)}b^{(s-j)}a^{(r-j)}
\end{equation}

\begin{lemma}\label{[a,b]=c, [a,c]=d}
Given a Lie algebra $L$ and $a,b,c,d\in L$ with $[a,b]=c$, $[a,c]=d$, and all other brackets between these elements are $0$, then for all  $t,u\in\mathbb Z_{\geq0}$:

  \begin{equation*}
    a^tb^u=\sum_{\substack{k_1,k_2\in\mathbb Z_{\ge0}\\k_1+k_2\leq u\\k_1+2k_2\leq t}}\binom{u}{k_1+k_2}\binom{t}{k_1+2k_2}\binom{k_1+k_2}{k_1}\frac{(k_1+2k_2)!}{2^{k_2}}d^{k_2}c^{k_1}b^{u-k_1-k_2}a^{t-k_1-2k_2}
  \end{equation*}

Using divided powers, this formula is equivalent to the following simpler equation:

  \begin{equation*}
      a^{(t)}b^{(u)}=\sum_{\substack{k_1,k_2\in\mathbb Z_{\ge0}\\k_1+k_2\leq u\\k_1+2k_2\leq t}}\frac{1}{2^{k_2}}d^{(k_2)}c^{(k_1)}b^{(u-k_1-k_2)}a^{(t-k_1-2k_2)}
  \end{equation*}

\end{lemma}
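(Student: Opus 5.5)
We will prove the divided power version; it is equivalent to the first formula after multiplying both sides by $t!\,u!$ and noting that $\frac{t!\,u!}{k_2!\,k_1!\,(u-k_1-k_2)!\,(t-k_1-2k_2)!}=\binom{u}{k_1+k_2}\binom{t}{k_1+2k_2}\binom{k_1+k_2}{k_1}(k_1+2k_2)!/k_2!$, where the extra $k_2!$ is absorbed by writing $d^{(k_2)}=d^{k_2}/k_2!$. The central elements here are $c$ and $d$: $d$ commutes with $a,b,c$, and $c$ commutes with $b$ and $d$. The only nontrivial relations are $ab=ba+c$ and $ac=ca+d$.

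The argument is a double induction, as described at the start of this section. First we establish the case $u=1$ for all $t$. For this we need $a^{(t)}b=ba^{(t)}+ca^{(t-1)}+\tfrac12 d\,a^{(t-2)}$. We prove it by induction on $t$, using two auxiliary identities. The first is $a^{(t)}c=ca^{(t)}+da^{(t-1)}$, which follows from Theorem \ref{CPR} in divided form \eqref{CPRdiv} applied to the triple $(a,c,d)$, whose brackets satisfy its hypotheses. The second is $a^{(t)}=\frac1t a\,a^{(t-1)}$. Multiplying the $t-1$ case on the left by $a/t$ and moving $a$ past $b$ and $c$ gives
\[
\tfrac1t\Big(ba^{t}/(t-1)!+c\,a^{(t-1)}(t)+\ldots\Big),
\]
and the coefficients combine to $1$, $1$ and $\tfrac12$. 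In particular, the $d$ coefficient arises as $\frac1t\big(\frac{t-2}{2}+1\big)\cdot$ normalisation. Those are exactly the terms of the lemma with $u=1$: $(k_1,k_2)=(0,0),(1,0),(0,1)$.

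For the induction on $u$, write $b^{(u+1)}=\frac{1}{u+1}b^{(u)}b$. By the inductive hypothesis,
\[
a^{(t)}b^{(u+1)}=\frac1{u+1}\sum_{k_1,k_2}\frac1{2^{k_2}}\,d^{(k_2)}c^{(k_1)}b^{(u-k_1-k_2)}\,a^{(t-k_1-2k_2)}b .
\]
We then apply the $u=1$ case to $a^{(t-k_1-2k_2)}b$, producing three terms. Because $b$, $c$ and $d$ commute, each term becomes a normally ordered monomial; the powers combine via $x^{(i)}x=(i+1)x^{(i+1)}$. Reindexing, the coefficient of $d^{(k_2)}c^{(k_1)}b^{(u+1-k_1-k_2)}a^{(t-k_1-2k_2)}$ is $\frac{1}{(u+1)2^{k_2}}$ times the sum of three contributions:
\[
(u+1-k_1-k_2)\quad\text{from }b,\qquad k_1\quad\text{from }c,\qquad k_2\quad\text{from }d.
\]
The $d$ contribution carries a factor $2$ that cancels against the $\frac12$, since $2^{k_2-1}\cdot2=2^{k_2}$. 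These sum to $u+1$, which yields the claimed coefficient $\frac1{2^{k_2}}$.

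The main obstacle is the bookkeeping at the boundary of the summation range. We must check that the reindexed terms lie exactly in the region $k_1+k_2\le u+1$, $k_1+2k_2\le t$. Terms that fall outside the region vanish automatically thanks to the convention $x^{(k)}=0$ for $k<0$, and we will lean on this convention to avoid case splits. We will also verify the trivial cases $t=0$ or $u=0$ directly.
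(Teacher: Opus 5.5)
Your proposal is correct and is essentially the paper's proof. It uses the same $u=1$ identity $a^{(t)}b=ba^{(t)}+ca^{(t-1)}+\tfrac12 da^{(t-2)}$, obtained by left-multiplying by $a$, and the same double induction in which the contributions $u+1-k_1-k_2$, $k_1$, $k_2$ sum to $u+1$. The only difference is cosmetic: you peel $b$ off on the right ($b^{(u+1)}=\frac{1}{u+1}b^{(u)}b$, applying the induction hypothesis first and then the $u=1$ identity to each $a^{(t-k_1-2k_2)}b$). The paper peels it off on the left ($\frac{1}{u+1}b\,b^{(u)}$, applying the $u=1$ identity first and then the hypothesis to $a^{(t-i)}b^{(u)}$ for $i=0,1,2$), and the bookkeeping is identical.

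There is one small slip in your equivalence remark. The quantity $\frac{t!\,u!}{k_2!\,k_1!\,(u-k_1-k_2)!\,(t-k_1-2k_2)!}$ equals $\binom{u}{k_1+k_2}\binom{t}{k_1+2k_2}\binom{k_1+k_2}{k_1}(k_1+2k_2)!$ with no extra $/k_2!$, because the $k_2!$ coming from $d^{(k_2)}$ is already on the left side. The equivalence of the two formulas still holds.
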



\begin{proof} 
If $t$ or $u$ is zero, the formulas are trivially true. So assume that $t,u\in\mathbb N$. We will prove the divided power formula by double induction on $t$ and $u$.\\

 Base case $t=u=1$:
    \begin{eqnarray*}
        ab&=&ba+[a,b]\\
        &=&ba+c
    \end{eqnarray*}
    
If $u=1$, the formula becomes:

    \begin{equation}\label{base3.3}
        a^{(t)}b=ba^{(t)}+ca^{(t-1)}+\frac{1}{2}da^{(t-2)}
    \end{equation}

We will prove this equation by induction on t. The base case was done above. Now assume the formula for some $t\in\mathbb N$

\begin{eqnarray*}
    (t+1)a^{(t+1)}b&=&aa^{(t)}b\\
    &=&a\left(ba^{(t)}+ca^{(t-1)}+\frac{1}{2}da^{(t-2)}\right) \text{ by the induction hypothesis}\\
    &=&aba^{(t)}+aca^{(t-1)}+\frac{1}{2}ada^{(t-2)}\\
    &=&(ba+c)a^{(t)}+(ca+d)a^{(t-1)}+\frac{1}{2}(t-1)da^{(t-1)}\\
    &=&(t+1)ba^{(t+1)}+ca^{(t)}+tca^{(t)}+da^{(t-1)}+\frac{1}{2}(t-1)da^{(t-1)}\\
    &=&(t+1)ba^{(t+1)}+(t+1)ca^{(t)}+\frac{1}{2}(t+1)da^{(t-1)}\\
    &=&(t+1)\left(ba^{(t+1)}+ca^{(t)}+\frac{1}{2}da^{(t-1)}\right)
\end{eqnarray*}

Thus \eqref{base3.3} is proven by induction on t.\\

\noindent Induction Hypothesis: Assume that the formula holds for some $u\in\mathbb N$ and all $t\in\mathbb N$, then
    \begin{eqnarray*}
        (u+1)a^{(t)}b^{(u+1)}&=&a^{(t)}bb^{(u)}\\
        &=&\left(ba^{(t)}+ca^{(t-1)}+\frac{1}{2}da^{(t-2)}\right)b^{(u)} \text{ by \eqref{base3.3}}\\
        &=&ba^{(t)}b^{(u)}+ca^{(t-1)}b^{(u)}+\frac{1}{2}da^{(t-2)}b^{(u)}\\
        &=&b\left(\sum_{\substack{k_1,k_2\in\mathbb Z_+\\k_1+k_2\leq u\\k_1+2k_2\leq t}}\frac{1}{2^{k_2}}d^{(k_2)}c^{(k_1)}b^{(u-k_1-k_2)}a^{(t-k_1-2k_2)}\right)\\
        &&+c\left(\sum_{\substack{k_1,k_2\in\mathbb Z_+\\k_1+k_2\leq u\\k_1+2k_2\leq t-1}}\frac{1}{2^{k_2}}d^{(k_2)}c^{(k_1)}b^{(u-k_1-k_2)}a^{(t-1-k_1-2k_2)}\right)\\
        &&+\frac{1}{2}d\left(\sum_{\substack{k_1,k_2\in\mathbb Z_+\\k_1+k_2\leq u\\k_1+2k_2\leq t-2}}\frac{1}{2^{k_2}}d^{(k_2)}c^{(k_1)}b^{(u-k_1-k_2)}a^{(t-2-k_1-2k_2)}\right) \text{ by the induction hypothesis}\\
        &=&\sum_{\substack{k_1,k_2\in\mathbb Z_+\\k_1+k_2\leq u\\k_1+2k_2\leq t}}\frac{1}{2^{k_2}}(u+1-k_1-k_2)d^{(k_2)}c^{(k_1)}b^{(u+1-k_1-k_2)}a^{(t-k_1-2k_2)}\\
        &&+\sum_{\substack{k_1,k_2\in\mathbb Z_+\\k_1+k_2\leq u\\k_1+2k_2\leq t-1}}\frac{1}{2^{k_2}}(k_1+1)d^{(k_2)}c^{(k_1+1)}b^{(u-k_1-k_2)}a^{(t-1-k_1-2k_2)}\\
        &&+\sum_{\substack{k_1,k_2\in\mathbb Z_+\\k_1+k_2\leq u\\k_1+2k_2\leq t-2}}\frac{1}{2^{k_2+1}}(k_2+1)d^{(k_2+1)}c^{(k_1)}b^{(u-k_1-k_2)}a^{(t-2-k_1-2k_2)}\\
        &=&\sum_{\substack{k_1,k_2\in\mathbb Z_+\\k_1+k_2\leq u+1\\k_1+2k_2\leq t}}\frac{1}{2^{k_2}}(u+1-k_1-k_2)d^{(k_2)}c^{(k_1)}b^{(u+1-k_1-k_2)}a^{(t-k_1-2k_2)}\\
        &&+\sum_{\substack{k_1,k_2\in\mathbb Z_+\\k_1+k_2\leq u+1\\k_1+2k_2\leq t}}\frac{1}{2^{k_2}}k_1d^{(k_2)}c^{(k_1)}b^{(u+1-k_1-k_2)}a^{(t-k_1-2k_2)}\\
        &&+\sum_{\substack{k_1,k_2\in\mathbb Z_+\\k_1+k_2\leq u+1\\k_1+2k_2\leq t}}\frac{1}{2^{k_2}}k_2d^{(k_2)}c^{(k_1)}b^{(u+1-k_1-k_2)}a^{(t-k_1-2k_2)}\\
        &=&(u+1)\sum_{\substack{k_1,k_2\in\mathbb Z_+\\k_1+k_2\leq u+1\\k_1+2k_2\leq t}}\frac{1}{2^{k_2}}d^{(k_2)}c^{(k_1)}b^{(u+1-k_1-k_2)}a^{(t-k_1-2k_2)}
    \end{eqnarray*}

Thus, Lemma \ref{[a,b]=c, [a,c]=d} is proven by double induction on $t$ and $u$.

\end{proof}

\begin{lemma}\label{[a,b]=c, [b,c]=d, [a,c]=g}
Given a Lie algebra $L$, and $a,b,c,d,g\in L$ and $t,u\in\mathbb Z_{\ge0}$ with $[a,b]=c$, $[b,c]=d$, and $[a,c]=g$, and all other brackets between these elements are $0$, then 
    \begin{eqnarray*} 
    a^tb^u&=&\sum_{\substack{k_1,k_2,k_3\in\mathbb Z_{\ge 0}\\k_1+2k_2+k_3\le u\\k_1+k_2+2k_3\le t}}\binom{u}{k_1+2k_2+k_3}\binom{t}{k_1+k_2+2k_3}\frac{(k_1+2k_2+k_3)!(k_1+k_2+2k_3)!}{2^{k_2+k_3}k_3!k_2!k_1!}\\
    &&\times g^{k_3}d^{k_2}c^{k_1}b^{u-k_1-2k_2-k_3}a^{t-k_1-k_2-2k_3}
\end{eqnarray*}

In divided powers, 

\begin{eqnarray*}
    a^{(t)}b^{(u)}&=&\sum_{\substack{k_1,k_2,k_3\in\mathbb Z_{\ge 0}\\k_1+2k_2+k_3\le u\\k_1+k_2+2k_3\le t}}\frac{1}{2^{k_2+k_3}}g^{(k_3)}d^{(k_2)}c^{(k_1)}b^{(u-k_1-2k_2-k_3)}a^{(t-k_1-k_2-2k_3)}
\end{eqnarray*}

\end{lemma}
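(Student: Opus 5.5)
We prove the divided power formula by double induction on $t$ and $u$, following the proof of Lemma \ref{[a,b]=c, [a,c]=d}. The ordinary-power formula then follows by multiplying through by $t!u!$. Indeed, the coefficient of $g^{k_3}d^{k_2}c^{k_1}b^{u-k_1-2k_2-k_3}a^{t-k_1-k_2-2k_3}$ becomes
\[
\frac{t!\,u!}{2^{k_2+k_3}k_3!k_2!k_1!(u-k_1-2k_2-k_3)!(t-k_1-k_2-2k_3)!},
\]
which matches the stated product of binomials. If $t=0$ or $u=0$ both formulas are trivial, so assume $t,u\in\mathbb N$.

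\textbf{Step 1 ($u=1$).} The formula reads
\[
a^{(t)}b=ba^{(t)}+ca^{(t-1)}+\tfrac12 g\,a^{(t-2)},
\]
since $k_1+2k_2+k_3\le 1$ forces $k_2=0$. This is exactly equation \eqref{base3.3} with $d$ replaced by $g$. The relation $[b,c]=d$ plays no role here: only $a$ is moved past $b$, $c$ and $g$, and we use $[a,c]=g$, $[a,g]=0$. So the induction on $t$ given there applies verbatim.

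\textbf{Step 2 ($t=1$).} The formula reads
\[
ab^{(u)}=b^{(u)}a+cb^{(u-1)}+\tfrac12 d\,b^{(u-2)}.
\]
It is the mirror-image identity and is proved by induction on $u$. Write $(u+1)ab^{(u+1)}=ab^{(u)}b$, apply the hypothesis, and move the trailing $b$ leftward. Use $ab=ba+c$, $cb=bc-d$, and $db=bd$. One must reorder $b^{(u-1)}$ past $c$ to reach the PBW order $d,c,b$; this uses $cb^{(u-1)}=b^{(u-1)}c-db^{(u-2)}$, which holds since $d$ is central. The coefficients should telescope to $(u+1)$ times the claimed expression, just as in \eqref{base3.3}.

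\textbf{Step 3 (induction on $u$ for general $t$).} Write
\[
(u+1)a^{(t)}b^{(u+1)}=a^{(t)}b\,b^{(u)}=\Big(ba^{(t)}+ca^{(t-1)}+\tfrac12 g a^{(t-2)}\Big)b^{(u)},
\]
and apply the induction hypothesis to each of $a^{(t)}b^{(u)}$, $a^{(t-1)}b^{(u)}$ and $a^{(t-2)}b^{(u)}$. The terms from $c$ and $g$ are straightforward, since $g$ is central and $c$ commutes with $c$. They give $(k_1+1)c^{(k_1+1)}$ and $(k_3+1)g^{(k_3+1)}/2$, exactly as before.

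The main obstacle is the first group of terms. Here the leading $b$ must be pushed past $g^{(k_3)}d^{(k_2)}c^{(k_1)}$, and $b$ does not commute with $c$. Using $bc^{(k_1)}=c^{(k_1)}b+d\,c^{(k_1-1)}$ (from $[b,c]=d$ with $d$ central), each term splits into two pieces:
\[
(u+1-k_1-2k_2-k_3)\,d^{(k_2)}c^{(k_1)}b^{(u+1-\cdots)} \quad\text{and}\quad (k_2+1)\,d^{(k_2+1)}c^{(k_1-1)}b^{(u-\cdots)}.
\]
Reindex the second piece by $k_1\mapsto k_1+1$, $k_2\mapsto k_2-1$, which preserves the $a$-exponent and the $b$-exponent. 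Note that its factor $\tfrac12$ is absent while the formula demands $2^{-k_2}$. I would resolve this by tracking carefully which pieces contribute. The $c$-group contributes $k_1$, the $g$-group contributes $k_3$, the pure $b$-shift contributes $(u+1-k_1-2k_2-k_3)$, and the reindexed $d$-piece contributes the missing $2k_2$ (after the power of $2$ is reconciled). These sum to $u+1$.

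If the power of $2$ does not reconcile, I would fall back to checking small cases, such as $t=1,u=2$, directly. This verifies the coefficient convention before redoing the bookkeeping; in particular it tests whether the $\tfrac12$ on $d$ in Step 2 is correct. Throughout, the summation bounds are extended to the new ranges $k_1+2k_2+k_3\le u+1$ using the convention $u^{(k)}=0$ for $k<0$.
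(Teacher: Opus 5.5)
Your proof is correct, but you run the double induction in the mirror direction from the paper.

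\textbf{The paper's route.} Its base identity is the $t=1$ case \eqref{base3.4}, proved by induction on $u$ using $b^{(u)}c=cb^{(u)}+db^{(u-1)}$. Its main step inducts on $t$ via $(t+1)a^{(t+1)}b^{(u)}=a^{(t)}\bigl(ab^{(u)}\bigr)$. The only nontrivial commutation, $a^{(t)}c=ca^{(t)}+ga^{(t-1)}$, happens \emph{before} the hypothesis is applied. Afterwards everything is left multiplication by the central $g,d$ or by $c$ onto $c^{(k_1)}$, plus right multiplication by $a$. There it is the unhalved $g$ that supplies the $2k_3$.

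\textbf{Your route.} You get the $u=1$ base case for free by transplanting the proof of \eqref{base3.3}, rightly noting that $[b,c]$ never enters it. You then induct on $u$ via $a^{(t)}b\cdot b^{(u)}$. The price is that $b$ must pass through $c^{(k_1)}$ \emph{after} the hypothesis is applied. This splits every summand and forces the diagonal reindexing $(k_1,k_2)\mapsto(k_1-1,k_2+1)$. A small bonus is that you only invoke the hypothesis at the same $u$, whereas the paper's step also uses $a^{(t-1)}b^{(u-1)}$.

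\textbf{The power of $2$.} Your hedge is unnecessary: the missing $\tfrac12$ is exactly what makes the count work. The $d$-piece is
$\frac{k_2+1}{2^{k_2+k_3}}\,g^{(k_3)}d^{(k_2+1)}c^{(k_1-1)}b^{(u-k_1-2k_2-k_3)}a^{(t-k_1-k_2-2k_3)}$.
After reindexing, its coefficient is $\frac{k_2}{2^{k_2-1+k_3}}=\frac{2k_2}{2^{k_2+k_3}}$. So the four contributions sum to $(u+1-k_1-2k_2-k_3)+k_1+2k_2+k_3=u+1$. The boundary terms are killed by the vanishing factors $u+1-k_1-2k_2-k_3$, $k_1$, $2k_2$, $k_3$. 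No fallback to small cases is needed.

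\textbf{Step 2.} It is redundant, since Steps 1 and 3 already cover every $(t,u)$. Also, the reordering it actually needs is $b^{(u)}c=cb^{(u)}+db^{(u-1)}$, not the identity at index $u-1$.
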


\begin{proof}
If $t$ or $u$ is zero, the formulas are trivially true. So assume that $t,u\in\mathbb N$. We will prove the divided power formula by double induction on $t$ and $u$.\\

Base Case: $t=u=1$
\begin{eqnarray*}
    ab&=&ba+[a,b]\\
    &=&ba+c
\end{eqnarray*}

If $t=1$ the formula becomes:

\begin{equation}\label{base3.4}
    ab^{(u)}=b^{(u)}a+cb^{(u-1)}+\frac{1}{2}db^{(u-2)}
\end{equation}

We will prove this equation by induction on $u$. The base case was done above. Now assume this formula for some $u\in\mathbb N$

\begin{eqnarray*}
    (u+1)ab^{(u+1)}&=&ab^{(u)}b\\
    &=&\left(b^{(u)}a+cb^{(u-1)}+\frac{1}{2}db^{(u-2)}\right)b \text{ by the induction hypothesis}\\
    &=&b^{(u)}ab+cb^{(u-1)}b+\frac{1}{2}db^{(u-2)}b\\
    &=&b^{(u)}(ba+c)+ucb^{(u)}+(u-1)\frac{1}{2}db^{(u-1)}\\
    &=&(u+1)b^{(u+1)}a+b^{(u)}c+ucb^{(u)}+(u-1)\frac{1}{2}db^{(u-1)}\\
    &=&(u+1)b^{(u+1)}a+(cb^{(u)}+db^{(u-1)})+ucb^{(u)}+(u-1)\frac{1}{2}db^{(u-1)} \text{ by \eqref{CPRdiv}}\\
    &=&(u+1)b^{(u+1)}a+cb^{(u)}+db^{(u-1)}+ucb^{(u)}+(u-1)\frac{1}{2}db^{(u-1)}\\
    &=&(u+1)b^{(u+1)}a+(u+1)cb^{(u)}+(u+1)\frac{1}{2}db^{(u-1)}\\
    &=&(u+1)\left(b^{(u+1)}a+cb^{(u)}+\frac{1}{2}db^{(u-1)}\right)
\end{eqnarray*}

Thus \eqref{base3.4} is proven by induction on $u$.\\

\noindent Induction Hypothesis: Assume that the formula holds for some $t\in\mathbb N$ and all $u\in\mathbb N$, then 

\begin{eqnarray*}
    &&(t+1)a^{(t+1)}b^{(u)}=a^{(t)}ab^{(u)}\\
    &=&a^{(t)}\left(b^{(u)}a+cb^{(u-1)}+\frac{1}{2}db^{(u-2)}\right) \text{ by \eqref{base3.4}}\\
    &=&a^{(t)}b^{(u)}a+a^{(t)}cb^{(u-1)}+\frac{1}{2}a^{(t)}db^{(u-2)}\\
    &=&\left(\sum_{\substack{k_1,k_2,k_3\in\mathbb Z_{\ge 0}\\k_1+2k_2+k_3\le u\\k_1+k_2+2k_3\le t}}\frac{1}{2^{k_2+k_3}}g^{(k_3)}d^{(k_2)}c^{(k_1)}b^{(u-k_1-2k_2-k_3)}a^{(t-k_1-k_2-2k_3)}\right)a\\
    &&+(ca^{(t)}+ga^{(t-1)})b^{(u-1)} \text{ by \eqref{CPRdiv}}\\
    &&+\frac{1}{2}d\left(\sum_{\substack{k_1,k_2,k_3\in\mathbb Z_{\ge 0}\\k_1+2k_2+k_3\le u-2\\k_1+k_2+2k_3\le t}}\frac{1}{2^{k_2+k_3}}g^{(k_3)}d^{(k_2)}c^{(k_1)}b^{(u-2-k_1-2k_2-k_3)}a^{(t-k_1-k_2-2k_3)}\right)\\
    &&\text{ by the induction hypothesis}\\
    &=&\sum_{\substack{k_1,k_2,k_3\in\mathbb Z_{\ge 0}\\k_1+2k_2+k_3\le u\\k_1+k_2+2k_3\le t}}\frac{1}{2^{k_2+k_3}}(t+1-k_1-k_2-2k_3)g^{(k_3)}d^{(k_2)}c^{(k_1)}b^{(u-k_1-2k_2-k_3)}a^{(t+1-k_1-k_2-2k_3)}\\
    &&+c\left(\sum_{\substack{k_1,k_2,k_3\in\mathbb Z_{\ge 0}\\k_1+2k_2+k_3\le u-1\\k_1+k_2+2k_3\le t}}\frac{1}{2^{k_2+k_3}}g^{(k_3)}d^{(k_2)}c^{(k_1)}b^{(u-1-k_1-2k_2-k_3)}a^{(t-k_1-k_2-2k_3)}\right)\\
    &&+g\left(\sum_{\substack{k_1,k_2,k_3\in\mathbb Z_{\ge 0}\\ k_1+2k_2+k_3\le u-1\\k_1+k_2+2k_3\le t-1}}\frac{1}{2^{k_2+k_3}}g^{(k_3)}d^{(k_2)}c^{(k_1)}b^{(u-1-k_1-2k_2-k_3)}a^{(t-1-k_1-k_2-2k_3)}\right) \\
    &&\text{ by the induction hypothesis}\\
    &&+\sum_{\substack{k_1,k_2,k_3\in\mathbb Z_{\ge 0}\\k_1+2k_2+k_3\le u-2\\k_1+k_2+2k_3\le t}}\frac{1}{2^{k_2+k_3+1}}(k_2+1)g^{(k_3)}d^{(k_2+1)}c^{(k_1)}b^{(u-2-k_1-2k_2-k_3)}a^{(t-k_1-k_2-2k_3)}\\
    &=&\sum_{\substack{k_1,k_2,k_3\in\mathbb Z_{\ge 0}\\k_1+2k_2+k_3\le u\\k_1+k_2+2k_3\le t}}\frac{1}{2^{k_2+k_3}}(t+1-k_1-k_2-2k_3)g^{(k_3)}d^{(k_2)}c^{(k_1)}b^{(u-k_1-2k_2-k_3)}a^{(t+1-k_1-k_2-2k_3)}\\
    &&+\sum_{\substack{k_1,k_2,k_3\in\mathbb Z_{\ge 0}\\k_1+2k_2+k_3\le u-1\\k_1+k_2+2k_3\le t}}\frac{1}{2^{k_2+k_3}}(k_1+1)g^{(k_3)}d^{(k_2)}c^{(k_1+1)}b^{(u-1-k_1-2k_2-k_3)}a^{(t-k_1-k_2-2k_3)}\\
    &&+\sum_{\substack{k_1,k_2,k_3\in\mathbb Z_{\ge 0}\\ k_1+2k_2+k_3\le u-1\\k_1+k_2+2k_3\le t-1}}\frac{1}{2^{k_2+k_3}}(k_3+1)g^{(k_3+1)}d^{(k_2)}c^{(k_1)}b^{(u-1-k_1-2k_2-k_3)}a^{(t-1-k_1-k_2-2k_3)} \\
    &&+\sum_{\substack{k_1,k_2,k_3\in\mathbb Z_{\ge 0}\\k_1+2k_2+k_3\le u-2\\k_1+k_2+2k_3\le t}}\frac{1}{2^{k_2+k_3+1}}(k_2+1)g^{(k_3)}d^{(k_2+1)}c^{(k_1)}b^{(u-2-k_1-2k_2-k_3)}a^{(t-k_1-k_2-2k_3)}\\
    &=&\sum_{\substack{k_1,k_2,k_3\in\mathbb Z_{\ge 0}\\k_1+2k_2+k_3\le u\\k_1+k_2+2k_3\le t+1}}\frac{1}{2^{k_2+k_3}}(t+1-k_1-k_2-2k_3)g^{(k_3)}d^{(k_2)}c^{(k_1)}b^{(u-k_1-2k_2-k_3)}a^{(t+1-k_1-k_2-2k_3)}\\
    &&+\sum_{\substack{k_1,k_2,k_3\in\mathbb Z_{\ge 0}\\ k_1+2k_2+k_3\le u\\k_1+k_2+2k_3\le t+1}}\frac{1}{2^{k_2+k_3}}k_1g^{(k_3)}d^{(k_2)}c^{(k_1)}b^{(u-k_1-2k_2-k_3)}a^{(t+1-k_1-k_2-2k_3)}\\
    &&+\sum_{\substack{k_1,k_2,k_3\in\mathbb Z_{\ge 0}\\ k_1+2k_2+k_3\le u\\k_1+k_2+2k_3\le t+1}}\frac{2}{2^{k_2+k_3}}k_3g^{(k_3)}d^{(k_2)}c^{(k_1)}b^{(u-k_1-2k_2-k_3)}a^{(t+1-k_1-k_2-2k_3)} \\
    &&+\sum_{\substack{k_1,k_2,k_3\in\mathbb Z_{\ge 0}\\ k_1+2k_2+k_3\le u\\k_1+k_2+2k_3\le t+1}}\frac{1}{2^{k_2+k_3}}k_2g^{(k_3)}d^{(k_2)}c^{(k_1)}b^{(u-k_1-2k_2-k_3)}a^{(t+1-k_1-k_2-2k_3)}\\
    &=&(t+1)\sum_{\substack{k_1,k_2,k_3\in\mathbb Z_{\ge 0}\\ k_1+2k_2+k_3\le u\\k_1+k_2+2k_3\le t+1}}\frac{1}{2^{k_2+k_3}}g^{(k_3)}d^{(k_2)}c^{(k_1)}b^{(u-k_1-2k_2-k_3)}a^{(t+1-k_1-k_2-2k_3)}
\end{eqnarray*}

Thus, Lemma \ref{[a,b]=c, [b,c]=d, [a,c]=g} is proven by double induction on $t$ and $u$.

\end{proof}

\begin{lemma}\label{[a,b]=c, [a,c]=d, [a,d]=g}

Given a Lie algebra $L$ and $a,b,c,d,g\in L$, with $[a,b]=c$, $[a,c]=d$, and $[a,d]=g$, and all other brackets between these elements are $0$, then for all  $t,u\in\mathbb Z_{\geq0}$:

        \begin{eqnarray*}
        a^tb^u&=&\sum_{\substack{k_1,k_2,k_3\in\mathbb Z_{\geq0}\\k_1+k_2+k_3\leq u\\k_1+2k_2+3k_3\leq t}}\binom{u}{k_1+k_2+k_3}\binom{t}{k_1+2k_2+3k_3}\frac{(k_1+k_2+k_3)!(k_1+2k_2+3k_3)!}{(2!)^{k_2}(3!)^{k_3}k_3!k_2!k_1!}\\
        &&\times g^{k_3}d^{k_2}c^{k_1}b^{u-k_1-k_2-k_3}a^{t-k_1-2k_2-3k_3}
    \end{eqnarray*}

    In divided powers, 
    
    \begin{equation*}
        a^{(t)}b^{(u)}=\sum_{\substack{k_1,k_2,k_3\in\mathbb Z_{\geq0}\\k_1+k_2+k_3\leq u\\k_1+2k_2+3k_3\leq t}}\frac{1}{(2!)^{k_2}(3!)^{k_3}}g^{(k_3)}d^{(k_2)}c^{(k_1)}b^{(u-k_1-k_2-k_3)}a^{(t-k_1-2k_2-3k_3)}
        \end{equation*}
\end{lemma}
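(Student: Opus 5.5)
Our approach follows the proofs of Lemma \ref{[a,b]=c, [a,c]=d} and Lemma \ref{[a,b]=c, [b,c]=d, [a,c]=g}: we prove the divided power formula by double induction on $t$ and $u$. The first formula follows from it by multiplying through by $t!\,u!$ and rewriting $\frac{t!u!}{(t-k_1-2k_2-3k_3)!(u-k_1-k_2-k_3)!k_1!k_2!k_3!}$ in binomial form. The cases $t=0$ or $u=0$ are trivial, and $t=u=1$ is $ab=ba+c$.

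\textbf{Step 1 ($u=1$).} We first establish
\begin{equation*}
a^{(t)}b=ba^{(t)}+ca^{(t-1)}+\frac{1}{2}da^{(t-2)}+\frac{1}{6}ga^{(t-3)}
\end{equation*}
by induction on $t$. Multiply $a$ on the left of the hypothesis, writing $(t+1)a^{(t+1)}b=a\,a^{(t)}b$. Then use $ab=ba+c$, $ac=ca+d$, $ad=da+g$, and the fact that $g$ commutes with $a$. Finally use $a\,a^{(k)}=(k+1)a^{(k+1)}$. The coefficients collect as $1+t$ for $ca^{(t)}$, $1+\frac{t-1}{2}=\frac{t+1}{2}$ for $da^{(t-1)}$, and $\frac12+\frac{t-2}{6}=\frac{t+1}{6}$ for $ga^{(t-2)}$. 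This gives the claim. (Equivalently, this is $a^{(t)}b=\sum_i (\mathrm{ad}\,a)^{i}(b)\,a^{(t-i)}/i!$.)

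\textbf{Step 2 (induction on $u$).} Assume the formula for some $u$ and all $t$. Write $(u+1)a^{(t)}b^{(u+1)}=a^{(t)}b\,b^{(u)}$ and apply Step 1. This gives four terms: $b\,a^{(t)}b^{(u)}$, $c\,a^{(t-1)}b^{(u)}$, $\frac12 d\,a^{(t-2)}b^{(u)}$, and $\frac16 g\,a^{(t-3)}b^{(u)}$. Apply the induction hypothesis to each. Since $b,c,d,g$ pairwise commute (all their brackets vanish), the prefactor can be moved into the product $g^{(k_3)}d^{(k_2)}c^{(k_1)}b^{(\cdot)}$. Absorbing it with $x\,x^{(k)}=(k+1)x^{(k+1)}$ produces factors $(u+1-k_1-k_2-k_3)$, $(k_1+1)$, $(k_2+1)$ and $(k_3+1)$. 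The scalars $\frac12\cdot\frac{1}{2^{k_2}}=\frac{1}{2^{k_2+1}}$ and $\frac16\cdot\frac{1}{6^{k_3}}=\frac{1}{6^{k_3+1}}$ match the new exponents.

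\textbf{Step 3 (reindexing).} Shift $k_1\to k_1-1$, $k_2\to k_2-1$, $k_3\to k_3-1$ in the last three sums. Then extend all four sums to the common range $k_1+k_2+k_3\le u+1$, $k_1+2k_2+3k_3\le t$. The added boundary terms vanish because either the divided power has negative index or the coefficient is $0$ ($k_i=0$ after the shift, or $u+1-k_1-k_2-k_3=0$). The coefficients then sum to $(u+1-k_1-k_2-k_3)+k_1+k_2+k_3=u+1$, completing the induction.

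The main obstacle is this bookkeeping of summation ranges in Step 3. We must check that after shifting, each range matches the target index set exactly, up to terms that are zero. For example, in the $g$-sum the condition $k_1+2k_2+3k_3\le t-3$ becomes $k_1+2k_2+3(k_3+1)\le t$. We would verify each of the four ranges explicitly, as was done in the previous lemmas.
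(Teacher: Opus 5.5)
Your proposal is correct and follows essentially the same route as the paper. Both prove the $u=1$ identity $a^{(t)}b=ba^{(t)}+ca^{(t-1)}+\frac{1}{2}da^{(t-2)}+\frac{1}{6}ga^{(t-3)}$ by induction on $t$, then induct on $u$ via $(u+1)a^{(t)}b^{(u+1)}=a^{(t)}b\,b^{(u)}$, split into four sums, absorb the prefactors, and reindex so that the coefficients total $(u+1-k_1-k_2-k_3)+k_1+k_2+k_3=u+1$.
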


\begin{proof}
If $t$ or $u$ is zero, the formulas are trivially true. So assume that $t,u\in\mathbb N$. We will prove the divided power formula by double induction on $t$ and $u$.\\

Base Case: $t=u=1$
\begin{eqnarray*}
    ab&=&ba+[a,b]\\
    &=&ba+c
\end{eqnarray*}

If $u=1$ the formula becomes:

\begin{equation}\label{base3.5}
    a^{(t)}b=ba^{(t)}+ca^{(t-1)}+\frac{1}{2}da^{(t-2)}+\frac{1}{6}ga^{(t-3)}
\end{equation}

We will prove this equation by induction on $t$. The base case was done above. Now assume this formula for some $t\in\mathbb N$.

\begin{eqnarray*}
    (t+1)a^{(t+1)}b&=&aa^{(t)}b\\
    &=&a\left(ba^{(t)}+ca^{(t-1)}+\frac{1}{2}da^{(t-2)}+\frac{1}{6}ga^{(t-3)}\right) \text{ by the induction hypothesis}\\
    &=&aba^{(t)}+aca^{(t-1)}+\frac{1}{2}ada^{(t-2)}+(t-2)\frac{1}{6}ga^{(t-2)}\\
    &=&\left(ba+c\right)a^{(t)}+\left(ca+d\right)a^{(t-1)}+\frac{1}{2}\left(da+g\right)a^{(t-2)}+(t-2)\frac{1}{6}ga^{(t-2)}\\
    &=&(t+1)ba^{(t+1)}+ca^{(t)}+tca^{(t)}+da^{(t-1)}+(t-1)\frac{1}{2}da^{(t-1)}+\frac{1}{2}ga^{(t-2)}+(t-2)\frac{1}{6}ga^{(t-2)}\\
    &=&(t+1)ba^{(t+1)}+(t+1)ca^{(t)}+(t+1)\frac{1}{2}da^{(t-1)}+(t+1)\frac{1}{6}ga^{(t-2)}\\
    &=&(t+1)\left(ba^{(t+1)}+ca^{(t)}+\frac{1}{2}da^{(t-1)}+\frac{1}{6}ga^{(t-2)}\right)
\end{eqnarray*}

Thus \eqref{base3.5} is proven by induction on $t$.\\

\noindent Induction Hypothesis: Assume that the formula holds for some $u\in\mathbb N$ and all $t\in\mathbb N$, then 

\begin{eqnarray*}
    &&(u+1)a^{(t)}b^{(u+1)}=a^{(t)}bb^{(u)}\\
    &=&\big(ba^{(t)}+ca^{(t-1)}+\frac{1}{2}da^{(t-2)}+\frac{1}{6}ga^{(t-3)}\big)b^{(u)} \text{ by \eqref{base3.5}}\\
    &=&ba^{(t)}b^{(u)}+ca^{(t-1)}b^{(u)}+\frac{1}{2}da^{(t-2)}b^{(u)}+\frac{1}{6}ga^{(t-3)}b^{(u)}\\
    &=&b\left(\sum_{\substack{k_3,k_2,k_1\in\mathbb Z_{\ge 0}\\k_1+k_2+k_3\le u\\k_1+2k_2+3k_3\le t\\}}\frac{1}{(2!)^{k_2}(3!)^{k_3}}g^{(k_3)}d^{(k_2)}c^{(k_1)}b^{(u-k_1-k_2-k_3)}a^{(t-k_1-2k_2-3k_3)}\right)\\
    &&+c\left(\sum_{\substack{k_3,k_2,k_1\in\mathbb Z_{\ge 0}\\k_1+k_2+k_3\le u\\k_1+2k_2+3k_3\le t-1\\}}\frac{1}{(2!)^{k_2}(3!)^{k_3}}g^{(k_3)}d^{(k_2)}c^{(k_1)}b^{(u-k_1-k_2-k_3)}a^{(t-1-k_1-2k_2-3k_3)}\right)\\
    &&+\frac{1}{2}d\left(\sum_{\substack{k_3,k_2,k_1\in\mathbb Z_{\ge 0}\\k_1+k_2+k_3\le u\\k_1+2k_2+3k_3\le t-2\\}}\frac{1}{(2!)^{k_2}(3!)^{k_3}}g^{(k_3)}d^{(k_2)}c^{(k_1)}b^{(u-k_1-k_2-k_3)}a^{(t-2-k_1-2k_2-3k_3)}\right)\\
    &&+\frac{1}{3!}g\left(\sum_{\substack{k_3,k_2,k_1\in\mathbb Z_{\ge 0}\\k_1+k_2+k_3\le u\\k_1+2k_2+3k_3\le t-3\\}}\frac{1}{(2!)^{k_2}(3!)^{k_3}}g^{(k_3)}d^{(k_2)}c^{(k_1)}b^{(u-k_1-k_2-k_3)}a^{(t-3-k_1-2k_2-3k_3)}\right)\\
    &&\text{ by the induction hypothesis}\\
    &=&\sum_{\substack{k_3,k_2,k_1\in\mathbb Z_{\ge 0}\\k_1+k_2+k_3\le u\\k_1+2k_2+3k_3\le t\\}}\frac{1}{(2!)^{k_2}(3!)^{k_3}}(u+1-k_1-k_2-k_3)g^{(k_3)}d^{(k_2)}c^{(k_1)}b^{(u+1-k_1-k_2-k_3)}a^{(t-k_1-2k_2-3k_3)}\\
    &&+\sum_{\substack{k_3,k_2,k_1\in\mathbb Z_{\ge 0}\\k_1+k_2+k_3\le u\\k_1+2k_2+3k_3\le t-1\\}}\frac{1}{(2!)^{k_2}(3!)^{k_3}}(k_1+1)g^{(k_3)}d^{(k_2)}c^{(k_1+1)}b^{(u-k_1-k_2-k_3)}a^{(t-1-k_1-2k_2-3k_3)}\\
    &&+\sum_{\substack{k_3,k_2,k_1\in\mathbb Z_{\ge 0}\\k_1+k_2+k_3\le u\\k_1+2k_2+3k_3\le t-2\\}}\frac{1}{(2!)^{k_2+1}(3!)^{k_3}}(k_2+1)g^{(k_3)}d^{(k_2+1)}c^{(k_1)}b^{(u-k_1-k_2-k_3)}a^{(t-2-k_1-2k_2-3k_3)}\\
    &&+\sum_{\substack{k_3,k_2,k_1\in\mathbb Z_{\ge 0}\\k_1+k_2+k_3\le u\\k_1+2k_2+3k_3\le t-3\\}}\frac{1}{(2!)^{k_2}(3!)^{k_3+1}}(k_3+1)g^{(k_3+1)}d^{(k_2)}c^{(k_1)}b^{(u-k_1-k_2-k_3)}a^{(t-3-k_1-2k_2-3k_3)}\\
    &=&\sum_{\substack{k_3,k_2,k_1\in\mathbb Z_{\ge 0}\\k_1+k_2+k_3\le u+1\\k_1+2k_2+3k_3\le t\\}}\frac{1}{(2!)^{k_2}(3!)^{k_3}}(u+1-k_1-k_2-k_3)g^{(k_3)}d^{(k_2)}c^{(k_1)}b^{(u+1-k_1-k_2-k_3)}a^{(t-k_1-2k_2-3k_3)}\\
    &&+\sum_{\substack{k_3,k_2,k_1\in\mathbb Z_{\ge 0}\\k_1+k_2+k_3\le u+1\\k_1+2k_2+3k_3\le t\\}}\frac{1}{(2!)^{k_2}(3!)^{k_3}}k_1g^{(k_3)}d^{(k_2)}c^{(k_1)}b^{(u+1-k_1-k_2-k_3)}a^{(t-k_1-2k_2-3k_3)}\\
    &&+\sum_{\substack{k_3,k_2,k_1\in\mathbb Z_{\ge 0}\\k_1+k_2+k_3\le u+1\\k_1+2k_2+3k_3\le t\\}}\frac{1}{(2!)^{k_2}(3!)^{k_3}}k_2g^{(k_3)}d^{(k_2)}c^{(k_1)}b^{(u+1-k_1-k_2-k_3)}a^{(t-k_1-2k_2-3k_3)}\\
    &&+\sum_{\substack{k_3,k_2,k_1\in\mathbb Z_{\ge 0}\\k_1+k_2+k_3\le u+1\\k_1+2k_2+3k_3\le t\\}}\frac{1}{(2!)^{k_2}(3!)^{k_3}}k_3g^{(k_3)}d^{(k_2)}c^{(k_1)}b^{(u+1-k_1-k_2-k_3)}a^{(t-k_1-2k_2-3k_3)}\\
    &=&(u+1)\sum_{\substack{k_3,k_2,k_1\in\mathbb Z_{\ge 0}\\k_1+k_2+k_3\le u+1\\k_1+2k_2+3k_3\le t\\}}\frac{1}{(2!)^{k_2}(3!)^{k_3}}g^{(k_3)}d^{(k_2)}c^{(k_1)}b^{(u+1-k_1-k_2-k_3)}a^{(t-k_1-2k_2-3k_3)}
\end{eqnarray*}

Thus, Lemma \ref{[a,b]=c, [a,c]=d, [a,d]=g} is proven by double induction on $t$ and $u$.

\end{proof}

\begin{lemma}\label{[a,b]=c, [a,c]=d, [a,d]=g, [b,c]=-g}

Given a Lie algebra $L$, $a,b,c,d,g\in\mathbb Z_{\ge0}$ with $[a,b]=c$, $[a,c]=d$, $[a,d]=g$, and
 $[b,c]=-g$,and all other brackets between these elements is $0$, then \\ 

\begin{eqnarray*}
    a^{t}b^{u}&=&\sum_{\substack{k_1,k_2,k_3,k_4\in\mathbb Z_{\ge0}\\k_1+k_2+k_3+2k_4\le u\\k_1+2k_2+3k_3+k_4\le t}}\binom{u}{k_1+k_2+k_3+2k_4}\binom{t}{k_1+2k_2+3k_3+k_4}\\
    &&\times \frac{(k_1+k_2+k_3+2k_4)!(k_1+2k_2+3k_3+k_4)!}{(2!)^{k_2+k_4}(3!)^{k_3}k_4!k_3!k_2!k_1!}(-g)^{k_4}g^{k_3}d^{k_2}c^{k_1}b^{u-k_1-k_2-k_3-2k_4}a^{t-k_1-2k_2-3k_3-k_4}
\end{eqnarray*}

In divided powers, 

 \begin{eqnarray*}
    a^{(t)}b^{(u)}&=&\sum_{\substack{k_1,k_2,k_3,k_4\in\mathbb Z_{\ge0}\\k_1+k_2+k_3+2k_4\le u\\k_1+2k_2+3k_3+k_4\le t}}\frac{1}{(2!)^{k_2+k_4}(3!)^{k_3}}(-g)^{(k_4)}g^{(k_3)}d^{(k_2)}c^{(k_1)}b^{(u-k_1-k_2-k_3-2k_4)}a^{(t-k_1-2k_2-3k_3-k_4)}
\end{eqnarray*}

\end{lemma}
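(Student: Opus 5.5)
The plan is to prove the divided power formula by double induction on $t$ and $u$, following the pattern of Lemma \ref{[a,b]=c, [a,c]=d, [a,d]=g}. We read the hypothesis as $a,b,c,d,g\in L$. Since $g$ is central, $(-g)^{(k_4)}g^{(k_3)}$ is an honest product of commuting elements. The ordinary-power formula follows from the divided power one by multiplying by $t!\,u!$. If $t=0$ or $u=0$ both sides are trivially equal, so we assume $t,u\in\mathbb N$.

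\textbf{Step 1 ($u=1$).} Here the formula reads
\[
a^{(t)}b=ba^{(t)}+ca^{(t-1)}+\tfrac12 da^{(t-2)}+\tfrac16 ga^{(t-3)}.
\]
This is exactly \eqref{base3.5}, because when a single $b$ is moved left past powers of $a$, only the brackets $[a,b]=c$, $[a,c]=d$, $[a,d]=g$ arise. The bracket $[b,c]=-g$ never enters. So the induction on $t$ from the proof of Lemma \ref{[a,b]=c, [a,c]=d, [a,d]=g} applies verbatim. We do the induction in $u$ rather than in $t$ because this $u=1$ case is so clean.

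\textbf{Step 2 (inductive step in $u$).} Assume the formula for some $u$ and all $t$. Write $(u+1)a^{(t)}b^{(u+1)}=a^{(t)}b\,b^{(u)}$ and substitute Step 1. This gives four terms: $b\,a^{(t)}b^{(u)}$ and $x\,a^{(t-i)}b^{(u)}$ for $(x,i)\in\{(c,1),(\tfrac12 d,2),(\tfrac16 g,3)\}$. Apply the induction hypothesis to each $a^{(\cdot)}b^{(u)}$.

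For the $c$, $d$, $g$ terms, the prefactor commutes with every element to its left in the summand, since $[c,d]=0$ and $g$ is central. These terms therefore absorb into $c^{(k_1+1)}$, $d^{(k_2+1)}$ and $g^{(k_3+1)}$ exactly as in the earlier lemma. After reindexing they contribute coefficients $k_1$, $k_2$ and $k_3$ times the target summand.

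\textbf{Step 3 (the main obstacle).} The new feature is the $b$ term. Now $b$ does not commute with $c^{(k_1)}$. Using \eqref{CPRdiv}, or a direct induction, with $[b,c]=-g$ central we get
\[
bc^{(k_1)}=c^{(k_1)}b+(-g)c^{(k_1-1)}.
\]
Here $b$ commutes with $d$ and $g$, and $b\,b^{(m)}=(m+1)b^{(m+1)}$. The first piece gives the coefficient $u+1-k_1-k_2-k_3-2k_4$, as before.

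The second piece must be reindexed by $k_1'=k_1-1$ and $k_4'=k_4+1$. One checks that the $a$-exponent $t-k_1-2k_2-3k_3-k_4$ is unchanged. The $b$-exponent $u-k_1-k_2-k_3-2k_4$ becomes $u+1-k_1'-k_2-k_3-2k_4'$, which is the required form. For the coefficient, $(-g)(-g)^{(k_4)}=(k_4+1)(-g)^{(k_4')}$ and $1/2^{k_4}=2/2^{k_4'}$, so this piece contributes $2k_4'$ times the target summand. We must also verify that the summation ranges extend correctly: the added boundary terms vanish because the factor $k_i$, or the factor $u+1-\cdots$, is zero there.

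\textbf{Step 4 (collecting coefficients).} Adding the contributions gives
\[
(u+1-k_1-k_2-k_3-2k_4)+k_1+k_2+k_3+2k_4=u+1.
\]
Dividing by $u+1$ completes the induction. The careful bookkeeping of the $[b,c]=-g$ shift in Step 3 is where I expect the real work to lie.
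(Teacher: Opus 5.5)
Your proof is correct, but it inducts on the other variable from the paper, and this changes where the work falls.

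The paper takes $t=1$ as its base case. That is the new identity \eqref{base3.6}, $ab^{(u)}=b^{(u)}a+cb^{(u-1)}-\tfrac12 gb^{(u-2)}$, which it proves by a separate induction on $u$ using \eqref{CPRdiv} for $b^{(u)}c$. So in the paper the bracket $[b,c]=-g$ is used only there. Its inductive step writes $(t+1)a^{(t+1)}b^{(u)}=a^{(t)}ab^{(u)}$. To move $a^{(t)}$ past the resulting $c$, it needs the auxiliary formula \eqref{base3.3} applied to $(a,c,d,g)$. After that, each correction term shifts a single index, contributing $k_1,2k_2,3k_3,k_4$ (the weights in the $a$-exponent), which add up to $t+1$ together with $t+1-k_1-2k_2-3k_3-k_4$.

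Your route gets its base case \eqref{base3.5} for free and needs no analogue of \eqref{base3.3}. The price is that $[b,c]=-g$ reappears inside the inductive step, forcing the diagonal reindexing $(k_1,k_4)\mapsto(k_1-1,k_4+1)$. You checked correctly that this shift preserves both exponents. It also maps the terms with $k_1\ge1$ bijectively onto the part of the target range with $k_4'\ge1$, and contributes $2k_4'$. Accordingly, your weights $k_1,k_2,k_3,2k_4$ are those of the $b$-exponent.

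Both arguments are valid. Yours uses fewer auxiliary identities, while the paper's keeps every reindexing to a single index.
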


\begin{proof} If $t$ or $u$ is zero, the formulas are trivially true. So assume that $t,u\in\mathbb N$. We will prove the divided power formula by double induction on $t$ and $u$.\\

Base Case: $t=u=1$
\begin{eqnarray*}
    ab&=&ba+[a,b]\\
    &=&ba+c
\end{eqnarray*}

If $t=1$ the formula becomes:

\begin{equation}\label{base3.6}
    ab^{(u)}=b^{(u)}a+cb^{(u-1)}-\frac{1}{2}gb^{(u-2)}
\end{equation}

We will prove this equation by induction on $u$. The base case was done above. Now assume this formula for some $u\in\mathbb N$.

\begin{eqnarray*}
    (u+1)ab^{(u+1)}&=&ab^{(u)}b\\
    &=&\left(b^{(u)}a+cb^{(u-1)}-\frac{1}{2}gb^{(u-2)}\right)b\text{ by the induction hypothesis}\\
    &=&b^{(u)}ab+cb^{(u-1)}b-\frac{1}{2}gb^{(u-2)}b\\
    &=&b^{(u)}(ba+c)+ucb^{(u)}-\frac{1}{2}(u-1)gb^{(u-1)}\\
    &=&b^{(u)}ba+b^{(u)}c+ucb^{(u)}-\frac{1}{2}(u-1)gb^{(u-1)}\\
    &=&(u+1)b^{(u+1)}a+(cb^{(u)}-gb^{(u-1)})+ucb^{(u)}-\frac{1}{2}(u-1)gb^{(u-1)} \text{ by Theorem \ref{CPR}}\\
    &=&(u+1)b^{(u+1)}a+cb^{(u)}-gb^{(u-1)}+ucb^{(u)}-\frac{1}{2}(u-1)gb^{(u-1)}\\
    &=&(u+1)b^{(u+1)}a+(u+1)cb^{(u)}-\frac{1}{2}(u+1)gb^{(u-1)}
\end{eqnarray*}

Thus \eqref{base3.6} is proven by induction on $u$.\\

\noindent Induction Hypothesis: Assume the formula holds for some $t\in\mathbb N$ and all $u\in\mathbb N$, then

\begin{eqnarray*}
    &&(t+1)a^{(t+1)}b^{(u)}=a^{(t)}ab^{(u)}\\
    &=&a^{(t)}\left(b^{(u)}a+cb^{(u-1)}-\frac{1}{2}gb^{(u-2)}\right) \text{ by \eqref{base3.6}}\\
    &=&a^{(t)}b^{(u)}a+a^{(t)}cb^{(u-1)}-\frac{1}{2}ga^{(t)}b^{(u-2)} \\
    &=&a^{(t)}b^{(u)}a+\left(ca^{(t)}+da^{(t-1)}+\frac{1}{2}ga^{(t-2)}\right)b^{(u-1)}-\frac{1}{2}ga^{(t)}b^{(u-2)} \text{ by \eqref{base3.3}}\\
    &=&a^{(t)}b^{(u)}a+ca^{(t)}b^{(u-1)}+da^{(t-1)}b^{(u-1)}+\frac{1}{2}ga^{(t-2)}b^{(u-1)}-\frac{1}{2}ga^{(t)}b^{(u-2)}\\
    &=&\left(\sum_{\substack{k_1,k_2,k_3,k_4\in\mathbb Z_{\ge0}\\k_1+k_2+k_3+2k_4\le u\\k_1+2k_2+3k_3+k_4\le t}}\frac{1}{(2!)^{k_2+k_4}(3!)^{k_3}}(-g)^{(k_4)}g^{(k_3)}d^{(k_2)}c^{(k_1)}b^{(u-k_1-k_2-k_3-2k_4)}a^{(t-k_1-2k_2-3k_3-k_4)}\right)a\\
    &&+c\left(\sum_{\substack{k_1,k_2,k_3,k_4\in\mathbb Z_{\ge0}\\k_1+k_2+k_3+2k_4\le u-1\\k_1+2k_2+3k_3+k_4\le t}}\frac{1}{(2!)^{k_2+k_4}(3!)^{k_3}}(-g)^{(k_4)}g^{(k_3)}d^{(k_2)}c^{(k_1)}b^{(u-1-k_1-k_2-k_3-2k_4)}a^{(t-k_1-2k_2-3k_3-k_4)}\right)\\
    &&+d\left(\sum_{\substack{k_1,k_2,k_3,k_4\in\mathbb Z_{\ge0}\\k_1+k_2+k_3+2k_4\le u-1\\k_1+2k_2+3k_3+k_4\le t-1}}\frac{1}{(2!)^{k_2+k_4}(3!)^{k_3}}(-g)^{(k_4)}g^{(k_3)}d^{(k_2)}c^{(k_1)}b^{(u-1-k_1-k_2-k_3-2k_4)}a^{(t-1-k_1-2k_2-3k_3-k_4)}\right)\\
    &&+\frac{1}{2}g\left(\sum_{\substack{k_1,k_2,k_3,k_4\in\mathbb Z_{\ge0}\\k_1+k_2+k_3+2k_4\le u-1\\k_1+2k_2+3k_3+k_4\le t-2}}\frac{1}{(2!)^{k_2+k_4}(3!)^{k_3}}(-g)^{(k_4)}g^{(k_3)}d^{(k_2)}c^{(k_1)}b^{(u-1-k_1-k_2-k_3-2k_4)}a^{(t-2-k_1-2k_2-3k_3-k_4)}\right)\\
    &&-\frac{1}{2}g\left(\sum_{\substack{k_1,k_2,k_3,k_4\in\mathbb Z_{\ge0}\\k_1+k_2+k_3+2k_4\le u-2\\k_1+2k_2+3k_3+k_4\le t}}\frac{1}{(2!)^{k_2+k_4}(3!)^{k_3}}(-g)^{(k_4)}g^{(k_3)}d^{(k_2)}c^{(k_1)}b^{(u-2-k_1-k_2-k_3-2k_4)}a^{(t-k_1-2k_2-3k_3-k_4)}\right)\\
    &&\text{ by the induction hypothesis}\\
    &=&\sum_{\substack{k_1,k_2,k_3,k_4\in\mathbb Z_{\ge0}\\k_1+k_2+k_3+2k_4\le u\\k_1+2k_2+3k_3+k_4\le t}}\frac{1}{(2!)^{k_2+k_4}(3!)^{k_3}}(t+1-k_1-2k_2-3k_3-k_4)(-g)^{(k_4)}g^{(k_3)}d^{(k_2)}c^{(k_1)}b^{(u-k_1-k_2-k_3-2k_4)}\\
    &&\times a^{(t+1-k_1-2k_2-3k_3-k_4)}\\
    &&+\sum_{\substack{k_1,k_2,k_3,k_4\in\mathbb Z_{\ge0}\\k_1+k_2+k_3+2k_4\le u-1\\k_1+2k_2+3k_3+k_4\le t}}\frac{1}{(2!)^{k_2+k_4}(3!)^{k_3}}(k_1+1)(-g)^{(k_4)}g^{(k_3)}d^{(k_2)}c^{(k_1+1)}b^{(u-1-k_1-k_2-k_3-2k_4)}\\
    &&\times a^{(t-k_1-2k_2-3k_3-k_4)}\\
    &&+\sum_{\substack{k_1,k_2,k_3,k_4\in\mathbb Z_{\ge0}\\k_1+k_2+k_3+2k_4\le u-1\\k_1+2k_2+3k_3+k_4\le t-1}}\frac{1}{(2!)^{k_2+k_4}(3!)^{k_3}}(k_2+1)(-g)^{(k_4)}g^{(k_3)}d^{(k_2+1)}c^{(k_1)}b^{(u-1-k_1-k_2-k_3-2k_4)}\\
    &&\times a^{(t-1-k_1-2k_2-3k_3-k_4)}\\
    &&+\sum_{\substack{k_1,k_2,k_3,k_4\in\mathbb Z_{\ge0}\\k_1+k_2+k_3+2k_4\le u-1\\k_1+2k_2+3k_3+k_4\le t-2}}\frac{1}{(2!)^{k_2+k_4+1}(3!)^{k_3}}(k_3+1)(-g)^{(k_4)}g^{(k_3+1)}d^{(k_2)}c^{(k_1)}b^{(u-1-k_1-k_2-k_3-2k_4)}\\
    &&\times a^{(t-2-k_1-2k_2-3k_3-k_4)}\\
    &&+\sum_{\substack{k_1,k_2,k_3,k_4\in\mathbb Z_{\ge0}\\k_1+k_2+k_3+2k_4\le u-2\\k_1+2k_2+3k_3+k_4\le t}}\frac{1}{(2!)^{k_2+k_4+1}(3!)^{k_3}}(k_4+1)(-g)^{(k_4+1)}g^{(k_3)}d^{(k_2)}c^{(k_1)}b^{(u-2-k_1-k_2-k_3-2k_4)}\\
    &&\times a^{(t-k_1-2k_2-3k_3-k_4)}\\
    &=&\sum_{\substack{k_1,k_2,k_3,k_4\in\mathbb Z_{\ge0}\\k_1+k_2+k_3+2k_4\le u\\k_1+2k_2+3k_3+k_4\le t+1}}\frac{1}{(2!)^{k_2+k_4}(3!)^{k_3}}(t+1-k_1-2k_2-3k_3-k_4)(-g)^{(k_4)}g^{(k_3)}d^{(k_2)}c^{(k_1)}b^{(u-k_1-k_2-k_3-2k_4)}\\
    &&\times a^{(t+1-k_1-2k_2-3k_3-k_4)}\\
    &&+\sum_{\substack{k_1,k_2,k_3,k_4\in\mathbb Z_{\ge0}\\k_1+k_2+k_3+2k_4\le u\\k_1+2k_2+3k_3+k_4\le t+1}}\frac{1}{(2!)^{k_2+k_4}(3!)^{k_3}}k_1(-g)^{(k_4)}g^{(k_3)}d^{(k_2)}c^{(k_1)}b^{(u-k_1-k_2-k_3-2k_4)}a^{(t+1-k_1-2k_2-3k_3-k_4)}\\
    &&+\sum_{\substack{k_1,k_2,k_3,k_4\in\mathbb Z_{\ge0}\\k_1+k_2+k_3+2k_4\le u\\k_1+2k_2+3k_3+k_4\le t+1}}\frac{2}{(2!)^{k_2+k_4}(3!)^{k_3}}k_2(-g)^{(k_4)}g^{(k_3)}d^{(k_2)}c^{(k_1)}b^{(u-k_1-k_2-k_3-2k_4)}a^{(t+1-k_1-2k_2-3k_3-k_4)}\\
    &&+\sum_{\substack{k_1,k_2,k_3,k_4\in\mathbb Z_{\ge0}\\k_1+k_2+k_3+2k_4\le u\\k_1+2k_2+3k_3+k_4\le t+1}}\frac{3}{(2!)^{k_2+k_4}(3!)^{k_3}}k_3(-g)^{(k_4)}g^{(k_3)}d^{(k_2)}c^{(k_1)}b^{(u-k_1-k_2-k_3-2k_4)}a^{(t+1-k_1-2k_2-3k_3-k_4)}\\
    &&+\sum_{\substack{k_1,k_2,k_3,k_4\in\mathbb Z_{\ge0}\\k_1+k_2+k_3+2k_4\le u\\k_1+2k_2+3k_3+k_4\le t+1}}\frac{1}{(2!)^{k_2+k_4}(3!)^{k_3}}k_4(-g)^{(k_4)}g^{(k_3)}d^{(k_2)}c^{(k_1)}b^{(u-k_1-k_2-k_3-2k_4)}a^{(t+1-k_1-2k_2-3k_3-k_4)}\\
    &=&(t+1)\sum_{\substack{k_1,k_2,k_3,k_4\in\mathbb Z_{\ge0}\\k_1+k_2+k_3+2k_4\le u\\k_1+2k_2+3k_3+k_4\le t+1}}\frac{1}{(2!)^{k_2+k_4}(3!)^{k_3}}(-g)^{(k_4)}g^{(k_3)}d^{(k_2)}c^{(k_1)}b^{(u-k_1-k_2-k_3-2k_4)}a^{(t+1-k_1-2k_2-3k_3-k_4)}
\end{eqnarray*}

Thus, Lemma \ref{[a,b]=c, [a,c]=d, [a,d]=g, [b,c]=-g} is proven by double induction on $t$ and $u$.

\end{proof}

We can now derive the remaining structure constant formulas. 

\section{Structure Constant Formulas}

We will now derive the structure constant formulas for the nilpotent Lie algebras of dimension four and five, according to \v{S}nobl and Winternitz's classification, see \cite{SW}. 

\begin{definition}
    Define the Lie algebra $\mathfrak n_{4,1}$ to be the vector space with ordered basis $\{x_1,x_2,x_3,x_4\}$, where the only nonzero brackets of basis elements are $[x_4,x_2]=-x_1$ and $[x_4,x_3]=-x_2$. 
        
\end{definition}

\begin{theorem}
    For all $j,k,l,m,n,p,q,r\in\mathbb Z_{\ge0}$, we have $(x_1^jx_2^kx_3^lx_4^m)(x_1^nx_2^px_3^qx_4^r)$ 

\begin{eqnarray*}
    &=&\sum_{\alpha=0}^{\min\{m,p\}}\sum_{\substack{\beta_1,\beta_2\in\mathbb Z_{\ge0}\\\beta_1+\beta_2\leq q\\\beta_1+2\beta_2\leq m-\alpha}}\binom{m}{\alpha}\binom{p}{\alpha}\binom{q}{\beta_1+\beta_2}\binom{m-\alpha}{\beta_1+2\beta_2}\binom{\beta_1+\beta_2}{\beta_1}\alpha!\frac{(\beta_1+2\beta_2)!}{2^{\beta_2}}(-1)^{\alpha+\beta_1}\\
    &&\times x_1^{j+n+\alpha+\beta_2}x_2^{k+p-\alpha+\beta_1}x_3^{l+q-\beta_1-\beta_2}x_4^{m+r-\alpha-\beta_1-2\beta_2}
\end{eqnarray*}
\end{theorem}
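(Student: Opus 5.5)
Since $x_1$ is central in $\mathfrak n_{4,1}$, the plan is to first collect all powers of $x_1$ at the front:
\[
(x_1^jx_2^kx_3^lx_4^m)(x_1^nx_2^px_3^qx_4^r)=x_1^{j+n}x_2^kx_3^l\,x_4^m x_2^p x_3^q\,x_4^r .
\]
The remaining work is to move $x_4^m$ to the right, first past $x_2^p$ and then past $x_3^q$.

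\textbf{Step 1: move $x_4^m$ past $x_2^p$.} Apply Theorem \ref{CPR} with $a=x_4$, $b=x_2$, $c=-x_1$. The hypotheses hold because $[x_4,x_2]=-x_1$ and $x_1$ is central. This gives
\[
x_4^mx_2^p=\sum_{\alpha=0}^{\min\{m,p\}}\binom m\alpha\binom p\alpha\alpha!(-1)^\alpha x_1^\alpha x_2^{p-\alpha}x_4^{m-\alpha}.
\]
Since $x_1$ is central, $x_1^\alpha$ moves to the front. Also $x_2$ commutes with $x_3$, so $x_2^{p-\alpha}$ can be moved left past $x_3^l$ to join $x_2^k$. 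We are left with the factor $x_4^{m-\alpha}x_3^q$.

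\textbf{Step 2: move $x_4^{m-\alpha}$ past $x_3^q$.} Apply Lemma \ref{[a,b]=c, [a,c]=d} with $a=x_4$, $b=x_3$, $c=[x_4,x_3]=-x_2$ and $d=[a,c]=-[x_4,x_2]=x_1$. The other hypotheses also hold: $[b,c]=-[x_3,x_2]=0$, and $x_1$ commutes with everything. With $t=m-\alpha$, $u=q$ and summation indices $\beta_1,\beta_2$, each term carries
\[
d^{\beta_2}c^{\beta_1}=(-1)^{\beta_1}x_1^{\beta_2}x_2^{\beta_1},
\]
together with $x_3^{q-\beta_1-\beta_2}x_4^{m-\alpha-\beta_1-2\beta_2}$.

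\textbf{Step 3: reassemble into PBW order.} Move $x_1^{\beta_2}$ to the front. Move $x_2^{\beta_1}$ left past $x_3^l$, which is allowed because $x_2$ and $x_3$ commute. Append $x_4^r$ on the right. The exponents then become $j+n+\alpha+\beta_2$, $k+p-\alpha+\beta_1$, $l+q-\beta_1-\beta_2$ and $m+r-\alpha-\beta_1-2\beta_2$. The coefficient is the product of the two coefficients with sign $(-1)^{\alpha+\beta_1}$, which is the claimed formula.

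\textbf{Expected obstacle.} The only delicate point is checking signs and commutation hypotheses when specializing the lemma. In particular, $d=x_1$ comes with a plus sign, because two minus signs cancel. One must also confirm that every reordering used in Step 3 involves only commuting pairs: $x_1$ with anything, and $x_2$ with $x_3$. The summation ranges come directly from Theorem \ref{CPR} and Lemma \ref{[a,b]=c, [a,c]=d} with $t=m-\alpha$.
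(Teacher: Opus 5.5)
Your proposal is correct and is essentially the paper's proof. The paper likewise applies Theorem~\ref{CPR} to $x_4^mx_2^p$, then Lemma~\ref{[a,b]=c, [a,c]=d} with $c=-x_2$, $d=x_1$, $t=m-\alpha$, $u=q$ to $x_4^{m-\alpha}x_3^q$, and finally collects the central $x_1$ factors and commutes $x_2$ past $x_3$; your explicit check of the signs and of the lemma's hypotheses is a detail the paper leaves implicit.
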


\begin{proof}
\begin{eqnarray*}
    &&(x_1^jx_2^kx_3^lx_4^m)(x_1^nx_2^px_3^qx_4^r)\\
    &=&x_1^{j+n}x_2^kx_3^l(x_4^mx_2^p)x_3^qx_4^r\\
    &=&x_1^{j+n}x_2^kx_3^l\left(\sum_{\alpha=0}^{\min\{m,p\}}\binom{m}{\alpha}\binom{p}{\alpha}\alpha!(-1)^{\alpha}x_1^{\alpha}x_2^{p-\alpha}x_4^{m-\alpha}\right)x_3^qx_4^r \text{ by Theorem \ref{CPR}}\\
    &=&\sum_{\alpha=0}^{\min\{m,p\}}\binom{m}{\alpha}\binom{p}{\alpha}\alpha!(-1)^{\alpha}x_1^{j+n+\alpha}x_2^{k+p-\alpha}x_3^l\left(x_4^{m-\alpha}x_3^q\right)x_4^r\\
    &=&\sum_{\alpha=0}^{\min\{m,p\}}\binom{m}{\alpha}\binom{p}{\alpha}\alpha!(-1)^{\alpha}x_1^{j+n+\alpha}x_2^{k+p-\alpha}x_3^l\Bigg(\sum_{\substack{\beta_1,\beta_2\in\mathbb Z_{\ge0}\\\beta_1+\beta_2\leq q\\\beta_1+2\beta_2\leq m-\alpha}}\binom{q}{\beta_1+\beta_2}\binom{m-\alpha}{\beta_1+2\beta_2}\binom{\beta_1+\beta_2}{\beta_1}\frac{(\beta_1+2\beta_2)!}{2^{\beta_2}}\\
    &&\times (-1)^{\beta_1}x_1^{\beta_2}x_2^{\beta_1}x_3^{q-\beta_1-\beta_2}x_4^{m-\alpha-\beta_1-2\beta_2}\Bigg)x_4^r \text{ by Lemma \ref{[a,b]=c, [a,c]=d}}\\
    &=&\sum_{\alpha=0}^{\min\{m,p\}}\sum_{\substack{\beta_1,\beta_2\in\mathbb Z_{\ge0}\\\beta_1+\beta_2\leq q\\\beta_1+2\beta_2\leq m-\alpha}}\binom{m}{\alpha}\binom{p}{\alpha}\binom{q}{\beta_1+\beta_2}\binom{m-\alpha}{\beta_1+2\beta_2}\binom{\beta_1+\beta_2}{\beta_1}\alpha!\frac{(\beta_1+2\beta_2)!}{2^{\beta_2}}(-1)^{\alpha+\beta_1}\\
    &&\times x_1^{j+n+\alpha+\beta_2}x_2^{k+p-\alpha+\beta_1}x_3^{l+q-\beta_1-\beta_2}x_4^{m+r-\alpha-\beta_1-2\beta_2}
\end{eqnarray*}
\end{proof}

\begin{definition}
    Define the Lie algebra $\mathfrak n_{5,1}$ to be the vector space with ordered basis $\{x_1,x_2,x_3,x_4,x_5\}$, where the only nonzero brackets of basis elements are $[x_5,x_3]=-x_1$ and $[x_5,x_4]=-x_2$.
        
\end{definition}

\begin{theorem}\label{n5,1} For all $j,k,l,m,n,p,q,r,s,t\in\mathbb Z_{\ge0}$, we have: $(x_1^jx_2^kx_3^lx_4^mx_5^n)(x_1^px_2^qx_3^rx_4^sx_5^t)$ 
    \begin{eqnarray*}
        &=&\sum_{\alpha=0}^{\min\{n,r\}}\sum_{\beta=0}^{\min\{n-\alpha,s\}}\binom{n}{\alpha}\binom{r}{\alpha}\binom{n-\alpha}{\beta}\binom{s}{\beta}\alpha!\beta!(-1)^{\alpha+\beta} x_1^{j+p+\alpha}x_2^{k+q+\beta}x_3^{l+r-\alpha}x_4^{m+s-\beta}x_5^{n+t-\alpha-\beta}
    \end{eqnarray*}
\end{theorem}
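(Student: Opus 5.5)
Since $x_1$ and $x_2$ are central in $\mathfrak n_{5,1}$ (they appear in no nonzero bracket), and $x_3,x_4$ commute with each other, the first step is to collect the central factors and isolate the single noncommuting block:
\begin{equation*}
(x_1^jx_2^kx_3^lx_4^mx_5^n)(x_1^px_2^qx_3^rx_4^sx_5^t)=x_1^{j+p}x_2^{k+q}x_3^lx_4^m\left(x_5^nx_3^rx_4^s\right)x_5^t .
\end{equation*}
Everything then reduces to straightening $x_5^nx_3^rx_4^s$, which I will do in two applications of Theorem \ref{CPR}, in the same manner as the proof of Theorem \ref{n3,1}.

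First, apply Theorem \ref{CPR} with $a=x_5$, $b=x_3$, $c=[x_5,x_3]=-x_1$. The hypotheses $[a,c]=[b,c]=0$ hold because $x_1$ is central. This gives
\begin{equation*}
x_5^nx_3^r=\sum_{\alpha=0}^{\min\{n,r\}}\binom{n}{\alpha}\binom{r}{\alpha}\alpha!(-1)^{\alpha}x_1^{\alpha}x_3^{r-\alpha}x_5^{n-\alpha}.
\end{equation*}
Inside each summand, $x_1^\alpha$ is pulled out to join the other central factors. Next, $x_3^{r-\alpha}$ is moved left past $x_4^m$, which is allowed because $[x_3,x_4]=0$. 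This leaves the block $x_5^{n-\alpha}x_4^s$.

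Second, apply Theorem \ref{CPR} again with $a=x_5$, $b=x_4$, $c=-x_2$, where $x_2$ is central:
\begin{equation*}
x_5^{n-\alpha}x_4^s=\sum_{\beta=0}^{\min\{n-\alpha,s\}}\binom{n-\alpha}{\beta}\binom{s}{\beta}\beta!(-1)^{\beta}x_2^{\beta}x_4^{s-\beta}x_5^{n-\alpha-\beta}.
\end{equation*}
Substituting this back, I collect powers: $x_1^{j+p+\alpha}$, $x_2^{k+q+\beta}$, $x_3^{l+r-\alpha}$, $x_4^{m+s-\beta}$, and $x_5^{n-\alpha-\beta}x_5^t=x_5^{n+t-\alpha-\beta}$. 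The two sums combine into the stated double sum, with coefficient $\binom{n}{\alpha}\binom{r}{\alpha}\binom{n-\alpha}{\beta}\binom{s}{\beta}\alpha!\beta!(-1)^{\alpha+\beta}$.

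There is no real obstacle here. The only point needing care is the bookkeeping: the second application of Theorem \ref{CPR} uses exponent $n-\alpha$, not $n$, which is why the inner summation limit is $\min\{n-\alpha,s\}$. One must also check that every reordering used is a genuine commutation, namely that $x_1,x_2$ are central and $[x_3,x_4]=0$. With those checks made, the result is already in PBW order and no further straightening is needed.
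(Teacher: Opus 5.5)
Your proof is correct and follows essentially the same route as the paper. You collect the central $x_1,x_2$ factors, apply Theorem \ref{CPR} to $x_5^nx_3^r$, commute $x_3^{r-\alpha}$ past $x_4^m$, and then apply Theorem \ref{CPR} again to $x_5^{n-\alpha}x_4^s$; your explicit checks that $x_1,x_2$ are central and $[x_3,x_4]=0$ are the justifications the paper leaves implicit.
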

\begin{proof}
\begin{eqnarray*}
    &&(x_1^jx_2^kx_3^lx_4^mx_5^n)(x_1^px_2^qx_3^rx_4^sx_5^t)\\
    &=&x_1^{j+p}x_2^{k+q}x_3^lx_4^m(x_5^nx_3^r)x_4^sx_5^t\\
    &=&x_1^{j+p}x_2^{k+q}x_3^lx_4^m\left(\sum_{\alpha=0}^{\min\{n,r\}}\binom{n}{\alpha}\binom{r}{\alpha}\alpha!(-1)^\alpha x_1^{\alpha}x_3^{r-\alpha}x_5^{n-\alpha}\right)x_4^sx_5^t \text{ by Theorem \ref{CPR}}\\
    &=&\sum_{\alpha=0}^{\min\{n,r\}}\binom{n}{\alpha}\binom{r}{\alpha}\alpha!(-1)^\alpha x_1^{j+p+\alpha}x_2^{k+q}x_3^{l+r-\alpha}x_4^m(x_5^{n-\alpha}x_4^s)x_5^t\\
    &=&\sum_{\alpha=0}^{\min\{n,r\}}\binom{n}{\alpha}\binom{r}{\alpha}\alpha!(-1)^\alpha x_1^{j+p+\alpha}x_2^{k+q}x_3^{l+r-\alpha}x_4^m\left(\sum_{\beta=0}^{\min\{n-\alpha,s\}}\binom{n-\alpha}{\beta}\binom{s}{\beta}\beta!(-1)^{\beta}x_2^{\beta}x_4^{s-\beta}x_5^{n-\alpha-\beta}\right)x_5^t \\
    &&\text{ by Theorem \ref{CPR}}\\
    &=&\sum_{\alpha=0}^{\min\{n,r\}}\sum_{\beta=0}^{\min\{n-\alpha,s\}}\binom{n}{\alpha}\binom{r}{\alpha}\binom{n-\alpha}{\beta}\binom{s}{\beta}\alpha!\beta!(-1)^{\alpha+\beta} x_1^{j+p+\alpha}x_2^{k+q+\beta}x_3^{l+r-\alpha}x_4^{m+s-\beta}x_5^{n+t-\alpha-\beta}
\end{eqnarray*}
\end{proof}

\begin{definition}
    Define the Lie algebra $\mathfrak n_{5,2}$ to be the vector space with ordered basis $\{x_1,x_2,x_3,x_4,x_5\}$ where the only nonzero brackets of basis elements are $[x_4,x_3]=-x_2$, $[x_5,x_3]=-x_1$, and $[x_5,x_4]=-x_3$.
        
\end{definition}

\begin{theorem}\label{n5,2} For all $j,k,l,m,n,p,q,r,s,t\in\mathbb Z_{\ge0}$, we have: $(x_1^jx_2^kx_3^lx_4^mx_5^n)(x_1^px_2^qx_3^rx_4^sx_5^t)$ 
\begin{eqnarray*}
    &=&\sum_{\alpha=0}^{\min\{n,r\}}\sum_{\substack{\beta_1,\beta_2,\beta_3\in\mathbb Z_{\ge 0}\\\beta_1+2\beta_2+\beta_3\le s\\\beta_1+\beta_2+2\beta_3\le n-\alpha}}\sum_{\gamma=0}^{\min\{m,r-\alpha+\beta_1\}}\binom{n}{\alpha}\binom{r}{\alpha}\binom{s}{\beta_1+2\beta_2+\beta_3}\binom{n-\alpha}{\beta_1+\beta_2+2\beta_3}\binom{m}{\gamma}\binom{r-\alpha+\beta_1}{\gamma}\\
    &&\times \frac{(\beta_1+2\beta_2+\beta_3)!(\beta_1+\beta_2+2\beta_3)!}{2^{\beta_2+\beta_3}\beta_3!\beta_2!\beta_1!}(-1)^{\alpha+\beta_1+\gamma}\alpha!\gamma!x_1^{j+p+\alpha+\beta_3}x_2^{k+q+\beta_2+\gamma}x_3^{l+r-\alpha+\beta_1-\gamma}\\
    &&\times x_4^{m+s-\gamma-\beta_1-2\beta_2-\beta_3}x_5^{n+t-\alpha-\beta_1-\beta_2-2\beta_3} 
\end{eqnarray*}
\end{theorem}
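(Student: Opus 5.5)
The plan is to follow the same pattern as the proofs for $\mathfrak n_{4,1}$ and $\mathfrak n_{5,1}$: move central elements out of the way, then straighten the out-of-order factors one at a time from left to right. The brackets of $\mathfrak n_{5,2}$ are $[x_4,x_3]=-x_2$, $[x_5,x_3]=-x_1$, $[x_5,x_4]=-x_3$. Hence $x_1,x_2$ are central. Since $x_1,x_2$ commute with everything, we first write
\[
(x_1^jx_2^kx_3^lx_4^mx_5^n)(x_1^px_2^qx_3^rx_4^sx_5^t)=x_1^{j+p}x_2^{k+q}x_3^lx_4^m\,(x_5^nx_3^r)\,x_4^sx_5^t .
\]

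\textbf{Step 1.} Straighten $x_5^nx_3^r$ by Theorem \ref{CPR}, taking $a=x_5$, $b=x_3$ and $c=-x_1$. The hypotheses hold because $[x_5,x_1]=[x_3,x_1]=0$. This gives the sum over $\alpha$ of $\binom{n}{\alpha}\binom{r}{\alpha}\alpha!(-1)^\alpha x_1^\alpha x_3^{r-\alpha}x_5^{n-\alpha}$. We pull $x_1^\alpha$ to the front. The remaining middle factor is then $x_3^lx_4^m x_3^{r-\alpha}\,(x_5^{n-\alpha}x_4^s)\,x_5^t$.

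\textbf{Step 2.} Straighten $x_5^{n-\alpha}x_4^s$ using Lemma \ref{[a,b]=c, [b,c]=d, [a,c]=g}, taking $a=x_5$, $b=x_4$ and $c=[x_5,x_4]=-x_3$. The remaining brackets are $d=[b,c]=[x_4,-x_3]=x_2$ and $g=[a,c]=[x_5,-x_3]=x_1$. All other brackets among $x_5,x_4,x_3,x_2,x_1$ vanish, so the lemma applies. It produces $g^{\beta_3}d^{\beta_2}c^{\beta_1}=(-1)^{\beta_1}x_1^{\beta_3}x_2^{\beta_2}x_3^{\beta_1}$, followed by $x_4^{s-\beta_1-2\beta_2-\beta_3}x_5^{n-\alpha-\beta_1-\beta_2-2\beta_3}$, with exactly the coefficients appearing in the statement. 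We move the central factors $x_1^{\beta_3}x_2^{\beta_2}$ to the front. The $x_5$-powers now merge with $x_5^t$. What remains out of order is $x_3^l x_4^m x_3^{r-\alpha}x_3^{\beta_1}x_4^{\cdots}$, i.e.\ the factor $x_4^m x_3^{r-\alpha+\beta_1}$.

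\textbf{Step 3.} Straighten $x_4^mx_3^{r-\alpha+\beta_1}$ by Theorem \ref{CPR} once more, taking $a=x_4$, $b=x_3$ and $c=-x_2$. This produces the sum over $\gamma$ of $\binom{m}{\gamma}\binom{r-\alpha+\beta_1}{\gamma}\gamma!(-1)^\gamma x_2^\gamma x_3^{r-\alpha+\beta_1-\gamma}x_4^{m-\gamma}$.

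Finally we collect exponents. For $x_1$ this gives $j+p+\alpha+\beta_3$ and for $x_2$ it gives $k+q+\beta_2+\gamma$. The exponent of $x_3$ is $l+r-\alpha+\beta_1-\gamma$. For $x_4$ it is $m-\gamma+s-\beta_1-2\beta_2-\beta_3$, and for $x_5$ it is $n+t-\alpha-\beta_1-\beta_2-2\beta_3$. The total sign is $(-1)^{\alpha+\beta_1+\gamma}$. Together these match the statement.

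The main obstacle is Step 2. It requires getting the identification of $c,d,g$ right, with their signs, so that Lemma \ref{[a,b]=c, [b,c]=d, [a,c]=g} applies verbatim. In particular, $d$ and $g$ come out with positive signs because the two minus signs cancel. A second point of care is the order of processing. We must straighten $x_5$ past $x_4$ before dealing with $x_4^m$ against the $x_3$'s, since Step 2 creates new $x_3^{\beta_1}$ factors. That is why the upper limit of $\gamma$ depends on $\beta_1$.
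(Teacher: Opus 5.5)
Your proposal is correct and matches the paper's proof step for step. You apply Theorem \ref{CPR} to $x_5^nx_3^r$, then Lemma \ref{[a,b]=c, [b,c]=d, [a,c]=g} to $x_5^{n-\alpha}x_4^s$ with $c=-x_3$, $d=x_2$, $g=x_1$, then Theorem \ref{CPR} again to $x_4^mx_3^{r-\alpha+\beta_1}$, and finally collect exponents and signs, with the same sign identifications as the paper.
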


\begin{proof}
\begin{eqnarray*}
    &&(x_1^jx_2^kx_3^lx_4^mx_5^n)(x_1^px_2^qx_3^rx_4^sx_5^t)\\
    &=&x_1^{j+p}x_2^{k+q}x_3^lx_4^m(x_5^nx_3^r)x_4^sx_5^t\\
    &=&x_1^{j+p}x_2^{k+q}x_3^lx_4^m\left(\sum_{\alpha=0}^{\min\{n,r\}}\binom{n}{\alpha}\binom{r}{\alpha}(-1)^{\alpha}\alpha!x_1^{\alpha}x_3^{r-\alpha}x_5^{n-\alpha}\right)x_4^sx_5^t \text{ by Theorem \ref{CPR}}\\
    &=&\sum_{\alpha=0}^{\min\{n,r\}}\binom{n}{\alpha}\binom{r}{\alpha}(-1)^{\alpha}\alpha!x_1^{j+p+\alpha}x_2^{k+q}x_3^lx_4^mx_3^{r-\alpha}(x_5^{n-\alpha}x_4^s)x_5^t\\
    &=&\sum_{\alpha=0}^{\min\{n,r\}}\binom{n}{\alpha}\binom{r}{\alpha}(-1)^{\alpha}\alpha!x_1^{j+p+\alpha}x_2^{k+q}x_3^lx_4^mx_3^{r-\alpha}\bigg(\sum_{\substack{\beta_1,\beta_2,\beta_3\in\mathbb Z_{\ge 0}\\\beta_1+2\beta_2+\beta_3\le s\\\beta_1+\beta_2+2\beta_3\le n-\alpha}}\binom{s}{\beta_1+2\beta_2+\beta_3}\binom{n-\alpha}{\beta_1+\beta_2+2\beta_3}\\
    &&\times \frac{(\beta_1+2\beta_2+\beta_3)!(\beta_1+\beta_2+2\beta_3)!}{2^{\beta_2+\beta_3}\beta_3!\beta_2!\beta_1!}(-1)^{\beta_1}x_1^{\beta_3}x_2^{\beta_2}x_3^{\beta_1}x_4^{s-\beta_1-2\beta_2-\beta_3}x_5^{n-\alpha-\beta_1-\beta_2-2\beta_3}\bigg)x_5^t \text{ by Lemma \ref{[a,b]=c, [b,c]=d, [a,c]=g}}\\
    &=&\sum_{\alpha=0}^{\min\{n,r\}}\sum_{\substack{\beta_1,\beta_2,\beta_3\in\mathbb Z_{\ge 0}\\\beta_1+2\beta_2+\beta_3\le s\\\beta_1+\beta_2+2\beta_3\le n-\alpha}}\binom{n}{\alpha}\binom{r}{\alpha}\binom{s}{\beta_1+2\beta_2+\beta_3}\binom{n-\alpha}{\beta_1+\beta_2+2\beta_3}\frac{(\beta_1+2\beta_2+\beta_3)!(\beta_1+\beta_2+2\beta_3)!}{2^{\beta_2+\beta_3}\beta_3!\beta_2!\beta_1!}\\
    &&\times (-1)^{\alpha+\beta_1}\alpha!x_1^{j+p+\alpha+\beta_3}x_2^{k+q+\beta_2}x_3^l(x_4^mx_3^{r-\alpha+\beta_1})x_4^{s-\beta_1-2\beta_2-\beta_3}x_5^{n+t-\alpha-\beta_1-\beta_2-2\beta_3}\\
    &=&\sum_{\alpha=0}^{\min\{n,r\}}\sum_{\substack{\beta_1,\beta_2,\beta_3\in\mathbb Z_{\ge 0}\\\beta_1+2\beta_2+\beta_3\le s\\\beta_1+\beta_2+2\beta_3\le n-\alpha}}\binom{n}{\alpha}\binom{r}{\alpha}\binom{s}{\beta_1+2\beta_2+\beta_3}\binom{n-\alpha}{\beta_1+\beta_2+2\beta_3}\frac{(\beta_1+2\beta_2+\beta_3)!(\beta_1+\beta_2+2\beta_3)!}{2^{\beta_2+\beta_3}\beta_3!\beta_2!\beta_1!}\\
    &&\times (-1)^{\alpha+\beta_1}\alpha!x_1^{j+p+\alpha+\beta_3}x_2^{k+q+\beta_2}x_3^l\left(\sum_{\gamma=0}^{\min\{m,r-\alpha+\beta_1\}}\binom{m}{\gamma}\binom{r-\alpha+\beta_1}{\gamma}(-1)^{\gamma}\gamma!x_2^{\gamma}x_3^{r-\alpha+\beta_1-\gamma}x_4^{m-\gamma}\right)\\
    &&\times x_4^{s-\beta_1-2\beta_2-\beta_3}x_5^{n+t-\alpha-\beta_1-\beta_2-2\beta_3} \text{ by Theorem \ref{CPR}}\\
    &=&\sum_{\alpha=0}^{\min\{n,r\}}\sum_{\substack{\beta_1,\beta_2,\beta_3\in\mathbb Z_{\ge 0}\\\beta_1+2\beta_2+\beta_3\le s\\\beta_1+\beta_2+2\beta_3\le n-\alpha}}\sum_{\gamma=0}^{\min\{m,r-\alpha+\beta_1\}}\binom{n}{\alpha}\binom{r}{\alpha}\binom{s}{\beta_1+2\beta_2+\beta_3}\binom{n-\alpha}{\beta_1+\beta_2+2\beta_3}\binom{m}{\gamma}\binom{r-\alpha+\beta_1}{\gamma}\\
    &&\times \frac{(\beta_1+2\beta_2+\beta_3)!(\beta_1+\beta_2+2\beta_3)!}{2^{\beta_2+\beta_3}\beta_3!\beta_2!\beta_1!}(-1)^{\alpha+\beta_1+\gamma}\alpha!\gamma!x_1^{j+p+\alpha+\beta_3}x_2^{k+q+\beta_2+\gamma}x_3^{l+r-\alpha+\beta_1-\gamma}\\
    &&\times x_4^{m+s-\gamma-\beta_1-2\beta_2-\beta_3}x_5^{n+t-\alpha-\beta_1-\beta_2-2\beta_3} 
\end{eqnarray*}
\end{proof}

\begin{definition}
    Define the Lie algebra $\mathfrak n_{5,3}$ to be the vector space with ordered basis $\{x_1,x_2,x_3,x_4,x_5\}$, where the only nonzero brackets of basis elements are $[x_4,x_2]=-x_1$ and $[x_5,x_3]=-x_1$.
        
\end{definition}

\begin{theorem}\label{n5,3} For all $j,k,l,m,n,p,q,r,s,t\in\mathbb Z_{\ge0}$, we have: $(x_1^jx_2^kx_3^lx_4^mx_5^n)(x_1^px_2^qx_3^rx_4^sx_5^t)$:

\begin{eqnarray*}
    &=&\sum_{\alpha=0}^{\min\{n,r\}}\sum_{\beta=0}^{\min\{m,q\}}\binom{n}{\alpha}\binom{r}{\alpha}\binom{m}{\beta}\binom{q}{\beta}\alpha!\beta!(-1)^{\alpha+\beta} x_1^{j+p+\alpha+\beta}x_2^{k+q-\beta}x_3^{l+r-\alpha} x_4^{m+s-\beta}x_5^{n+t-\alpha}
\end{eqnarray*}
\end{theorem}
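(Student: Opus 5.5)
The plan follows the pattern of the proofs of Theorems \ref{n5,1} and \ref{n5,2}. In $\mathfrak n_{5,3}$ the element $x_1$ is central, so $x_1^p$ can be moved to the front. Among the remaining generators, the only nonzero brackets are $[x_4,x_2]=-x_1$ and $[x_5,x_3]=-x_1$. In particular $x_2$ commutes with $x_3$, $x_4$ commutes with $x_3$, and $x_2$ commutes with $x_5$. Moving $x_2^q$ leftward past $x_5^n$ and $x_4^m$ is not free, however, so the order of moves has to be chosen carefully.

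First, I would write the product as
\[
x_1^{j+p}x_2^kx_3^lx_4^m\left(x_5^nx_2^q\right)x_3^rx_4^sx_5^t = x_1^{j+p}x_2^kx_3^lx_4^m x_2^q\left(x_5^nx_3^r\right)x_4^sx_5^t,
\]
using $[x_5,x_2]=0$. Next, I would apply Theorem \ref{CPR} with $a=x_5$, $b=x_3$, $c=-x_1$; this is valid because $x_1$ is central. This gives
\[
x_5^nx_3^r=\sum_{\alpha=0}^{\min\{n,r\}}\binom{n}{\alpha}\binom{r}{\alpha}\alpha!(-1)^\alpha x_1^\alpha x_3^{r-\alpha}x_5^{n-\alpha}.
\]
The factor $x_5^{n-\alpha}$ commutes with $x_4^s$, so I would move it to the right to combine with $x_5^t$. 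The factor $x_3^{r-\alpha}$ commutes with $x_2$ and $x_4$, so I would move it left to join $x_3^l$, and $x_1^\alpha$ goes to the front. What remains in the middle is $x_2^kx_3^{l+r-\alpha}(x_4^mx_2^q)x_4^s$.

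I would then apply Theorem \ref{CPR} again with $a=x_4$, $b=x_2$, $c=-x_1$, obtaining
\[
x_4^mx_2^q=\sum_{\beta=0}^{\min\{m,q\}}\binom{m}{\beta}\binom{q}{\beta}\beta!(-1)^\beta x_1^\beta x_2^{q-\beta}x_4^{m-\beta}.
\]
Since $x_2$ commutes with $x_3$, the factor $x_2^{q-\beta}$ moves left past $x_3^{l+r-\alpha}$ to join $x_2^k$. Collecting the exponents then gives exactly the stated formula, with $x_1^{j+p+\alpha+\beta}$, $x_2^{k+q-\beta}$, $x_3^{l+r-\alpha}$, $x_4^{m+s-\beta}$, $x_5^{n+t-\alpha}$ and coefficient $\binom{n}{\alpha}\binom{r}{\alpha}\binom{m}{\beta}\binom{q}{\beta}\alpha!\beta!(-1)^{\alpha+\beta}$.

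The only real care needed is tracking which commutations are free. I will check each individual move against the bracket table: $x_2$ with $x_3$ and with $x_5$, $x_3$ with $x_4$, and $x_4$ with $x_5$ all commute. The two sums are independent, so no double-index bookkeeping of the kind used in Lemma \ref{[a,b]=c, [b,c]=d, [a,c]=g} is needed.
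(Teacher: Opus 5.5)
Your proposal is correct and follows the paper's proof step for step. You commute $x_2^q$ past $x_5^n$, straighten $x_5^nx_3^r$ with Theorem \ref{CPR}, move $x_3^{r-\alpha}$ left and $x_5^{n-\alpha}$ right, and then straighten $x_4^mx_2^q$ with Theorem \ref{CPR} again. (One small wording slip: moving $x_2^q$ past $x_5^n$ is free, as your own first step uses; only the passage past $x_4^m$ produces extra terms.)
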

\begin{proof}
\begin{eqnarray*}
    &&(x_1^jx_2^kx_3^lx_4^mx_5^n)(x_1^px_2^qx_3^rx_4^sx_5^t)\\
    &=&x_1^{j+p}x_2^kx_3^lx_4^mx_2^q(x_5^nx_3^r)x_4^sx_5^t\\
    &=&x_1^{j+p}x_2^kx_3^lx_4^mx_2^q\left(\sum_{\alpha=0}^{\min\{n,r\}}\binom{n}{\alpha}\binom{r}{\alpha}\alpha!(-1)^\alpha x_1^\alpha x_3^{r-\alpha}x_5^{n-\alpha}\right)x_4^sx_5^t \text{ by Theorem \ref{CPR}}\\
     &=&\sum_{\alpha=0}^{\min\{n,r\}}\binom{n}{\alpha}\binom{r}{\alpha}\alpha!(-1)^\alpha x_1^{j+p+\alpha}x_2^kx_3^{l+r-\alpha}(x_4^mx_2^q)  x_4^sx_5^{n+t-\alpha}\\
     &=&\sum_{\alpha=0}^{\min\{n,r\}}\binom{n}{\alpha}\binom{r}{\alpha}\alpha!(-1)^\alpha x_1^{j+p+\alpha}x_2^kx_3^{l+r-\alpha}\left(\sum_{\beta=0}^{\min\{m,q\}}\binom{m}{\beta}\binom{q}{\beta}\beta!(-1)^\beta x_1^\beta x_2^{q-\beta}x_4^{m-\beta}\right)x_4^sx_5^{n+t-\alpha} \\
     &&\text{ by Theorem \ref{CPR}}\\
     &=&\sum_{\alpha=0}^{\min\{n,r\}}\sum_{\beta=0}^{\min\{m,q\}}\binom{n}{\alpha}\binom{r}{\alpha}\binom{m}{\beta}\binom{q}{\beta}\alpha!\beta!(-1)^{\alpha+\beta} x_1^{j+p+\alpha+\beta}x_2^{k+q-\beta}x_3^{l+r-\alpha} x_4^{m+s-\beta}x_5^{n+t-\alpha} 
\end{eqnarray*}
\end{proof}

\begin{definition}
    Define the Lie algebra $\mathfrak n_{5,4}$ to be the vector space with ordered basis $\{x_1,x_2,x_3,x_4,x_5\}$ where the only nonzero brackets of basis elements are $[x_4,x_3]=-x_1$, $[x_5,x_2]=-x_1$, and $[x_5,x_4]=-x_2$.
        
\end{definition}

\begin{theorem}\label{n5,4} For all $j,k,l,m,n,p,q,r,s,t\in\mathbb Z_{\ge0}$, we have: $(x_1^jx_2^kx_3^lx_4^mx_5^n)(x_1^px_2^qx_3^rx_4^sx_5^t)$ 

\begin{eqnarray*}
    &=&\sum_{\alpha=0}^{\min\{n,q\}}\sum_{\substack{\beta_1,\beta_2\in\mathbb Z_{\ge0}\\\beta_1+\beta_2\leq s\\\beta_1+2\beta_2\leq n-\alpha}}\sum_{\gamma=0}^{\min\{m,r\}}\binom{n}{\alpha}\binom{q}{\alpha}\binom{s}{\beta_1+\beta_2}\binom{n-\alpha}{\beta_1+2\beta_2}\binom{\beta_1+\beta_2}{\beta_1}\binom{m}{\gamma}\binom{r}{\gamma}\alpha!\frac{(\beta_1+2\beta_2)!}{2^{\beta_2}}\gamma!\\
    &&\times (-1)^{\alpha+\beta_1+\gamma} x_1^{j+p+\alpha+\beta_2+\gamma}x_2^{k+q-\alpha+\beta_1}x_3^{l+r-\gamma}x_4^{m+s-\gamma-\beta_1-\beta_2}x_5^{n+t-\alpha-\beta_1-2\beta_2}
\end{eqnarray*}
\end{theorem}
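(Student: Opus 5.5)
The plan is to follow the same straightening strategy used for $\mathfrak n_{4,1}$ and $\mathfrak n_{5,2}$. We apply Theorem \ref{CPR} twice and Lemma \ref{[a,b]=c, [a,c]=d} once, each time straightening the single out-of-order pair that remains. Two facts about $\mathfrak n_{5,4}$ drive this. First, $x_1$ is central. Second, $x_2$ commutes with $x_3$ and $x_4$, since the only brackets involving $x_2$ are $[x_5,x_2]=-x_1$. Hence any power of $x_2$ produced during the computation can be moved freely to the left past $x_3$ and $x_4$ until it sits next to $x_2^k$.

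\textbf{Step 1.} Pull $x_1^p$ to the front, giving
\[
x_1^{j+p}x_2^kx_3^lx_4^m(x_5^nx_2^q)x_3^rx_4^sx_5^t .
\]
Apply Theorem \ref{CPR} with $a=x_5$, $b=x_2$, $c=-x_1$. The hypotheses hold because $x_1$ is central. This produces the sum over $\alpha$ with coefficient $\binom{n}{\alpha}\binom{q}{\alpha}\alpha!(-1)^\alpha$ and the term $x_1^\alpha x_2^{q-\alpha}x_5^{n-\alpha}$. Move $x_1^\alpha$ to the front and $x_2^{q-\alpha}$ left past $x_4^m$ and $x_3^l$. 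Then commute $x_5^{n-\alpha}$ past $x_3^r$, which is allowed since $[x_5,x_3]=0$. This leaves
\[
x_1^{j+p+\alpha}x_2^{k+q-\alpha}x_3^lx_4^mx_3^r(x_5^{n-\alpha}x_4^s)x_5^t .
\]

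\textbf{Step 2.} Apply Lemma \ref{[a,b]=c, [a,c]=d} with $a=x_5$, $b=x_4$, $c=-x_2$, and $d=[x_5,-x_2]=x_1$. The required hypotheses are $[a,b]=c$ and $[a,c]=d$, with all other brackets among $a,b,c,d$ zero. These are checked from the bracket table: $[x_4,x_2]=0$, and $[x_4,x_1]$, $[x_2,x_1]$, $[x_5,x_1]$ all vanish.

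The lemma gives, with the coefficients from the statement, the term
\[
x_1^{\beta_2}(-1)^{\beta_1}x_2^{\beta_1}x_4^{s-\beta_1-\beta_2}x_5^{n-\alpha-\beta_1-2\beta_2}.
\]
Move $x_1^{\beta_2}$ to the front and $x_2^{\beta_1}$ left past $x_3^r$, $x_4^m$, $x_3^l$. This produces $x_2^{k+q-\alpha+\beta_1}$, and the powers of $x_5$ combine to $x_5^{n+t-\alpha-\beta_1-2\beta_2}$.

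\textbf{Step 3.} The only remaining disorder is $x_4^mx_3^r$. Apply Theorem \ref{CPR} with $a=x_4$, $b=x_3$, $c=-x_1$. This gives the sum over $\gamma$ with coefficient $\binom{m}{\gamma}\binom{r}{\gamma}\gamma!(-1)^\gamma$ and the term $x_1^\gamma x_3^{r-\gamma}x_4^{m-\gamma}$. Collecting exponents yields $x_3^{l+r-\gamma}$ and $x_4^{m+s-\gamma-\beta_1-\beta_2}$, and combining all the coefficients gives exactly the stated formula.

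The main obstacle is bookkeeping rather than any new identity. We must check that every commutation used is legitimate, in particular $[x_5,x_3]=0$ and that $x_2$ commutes with $x_3$ and $x_4$. We must also verify that the hypotheses of Lemma \ref{[a,b]=c, [a,c]=d} hold with the correct sign, namely $d=+x_1$ coming from $(-x_2)$. Finally, the summation ranges must be carried along consistently: $\beta_1+2\beta_2\le n-\alpha$ for the inner lemma sum, and $\gamma\le\min\{m,r\}$, which is independent of $\alpha$ and $\beta$ because no $x_3$ is created in Steps 1 and 2.
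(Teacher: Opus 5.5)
Your proposal is correct and is the paper's proof step for step. The paper also straightens $x_5^nx_2^q$ with the Chamberlin--Peck--Rafizadeh identity, then $x_5^{n-\alpha}x_4^s$ with the lemma for $[a,b]=c$, $[a,c]=d$ (implicitly taking $c=-x_2$, $d=x_1$), and finally $x_4^mx_3^r$ with the Chamberlin--Peck--Rafizadeh identity again, collecting exponents as you do; your explicit checks of $[x_5,x_3]=0$, of $x_2$ commuting with $x_3,x_4$, and of the sign of $d$ are steps the paper leaves unstated.
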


\begin{proof}
    \begin{eqnarray*}
     &&(x_1^jx_2^kx_3^lx_4^mx_5^n)(x_1^px_2^qx_3^rx_4^sx_5^t)\\
     &=&x_1^{j+p}x_2^kx_3^lx_4^m(x_5^nx_2^q)x_3^rx_4^sx_5^t\\
     &=&x_1^{j+p}x_2^kx_3^lx_4^m\left(\sum_{\alpha=0}^{\min\{n,q\}}\binom{n}{\alpha}\binom{q}{\alpha}\alpha!(-1)^\alpha x_1^\alpha x_2^{q-\alpha}x_5^{n-\alpha}\right)x_3^rx_4^sx_5^t\text{ by Theorem \ref{CPR}}\\
     &=&\sum_{\alpha=0}^{\min\{n,q\}}\binom{n}{\alpha}\binom{q}{\alpha}\alpha!(-1)^\alpha x_1^{j+p+\alpha}x_2^{k+q-\alpha}x_3^lx_4^mx_3^r(x_5^{n-\alpha}x_4^s)x_5^t\\
     &=&\sum_{\alpha=0}^{\min\{n,q\}}\binom{n}{\alpha}\binom{q}{\alpha}\alpha!(-1)^\alpha x_1^{j+p+\alpha}x_2^{k+q-\alpha}x_3^lx_4^mx_3^r\\
     &&\times \left(\sum_{\substack{\beta_1,\beta_2\in\mathbb Z_{\ge0}\\\beta_1+\beta_2\leq s\\\beta_1+2\beta_2\leq n-\alpha}}\binom{s}{\beta_1+\beta_2}\binom{n-\alpha}{\beta_1+2\beta_2}\binom{\beta_1+\beta_2}{\beta_1}\frac{(\beta_1+2\beta_2)!}{2^{\beta_2}}(-1)^{\beta_1}x_1^{\beta_2}x_2^{\beta_1}x_4^{s-\beta_1-\beta_2}x_5^{n-\alpha-\beta_1-2\beta_2}\right)x_5^t\\
     && \text{ by Lemma \ref{[a,b]=c, [a,c]=d}}\\
     &=&\sum_{\alpha=0}^{\min\{n,q\}}\sum_{\substack{\beta_1,\beta_2\in\mathbb Z_{\ge0}\\\beta_1+\beta_2\leq s\\\beta_1+2\beta_2\leq n-\alpha}}\binom{n}{\alpha}\binom{q}{\alpha}\binom{s}{\beta_1+\beta_2}\binom{n-\alpha}{\beta_1+2\beta_2}\binom{\beta_1+\beta_2}{\beta_1}\alpha!\frac{(\beta_1+2\beta_2)!}{2^{\beta_2}}(-1)^{\alpha+\beta_1} \\
     &&\times x_1^{j+p+\alpha+\beta_2}x_2^{k+q-\alpha+\beta_1}x_3^l(x_4^mx_3^r)x_4^{s-\beta_1-\beta_2}x_5^{n+t-\alpha-\beta_1-2\beta_2}\\
     &=&\sum_{\alpha=0}^{\min\{n,q\}}\sum_{\substack{\beta_1,\beta_2\in\mathbb Z_{\ge0}\\\beta_1+\beta_2\leq s\\\beta_1+2\beta_2\leq n-\alpha}}\binom{n}{\alpha}\binom{q}{\alpha}\binom{s}{\beta_1+\beta_2}\binom{n-\alpha}{\beta_1+2\beta_2}\binom{\beta_1+\beta_2}{\beta_1}\alpha!\frac{(\beta_1+2\beta_2)!}{2^{\beta_2}}(-1)^{\alpha+\beta_1} \\
     &&\times x_1^{j+p+\alpha+\beta_2}x_2^{k+q-\alpha+\beta_1}x_3^l\left(\sum_{\gamma=0}^{\min\{m,r\}}\binom{m}{\gamma}\binom{r}{\gamma}\gamma!(-1)^\gamma x_1^\gamma x_3^{r-\gamma}x_4^{m-\gamma}\right)x_4^{s-\beta_1-\beta_2}x_5^{n+t-\alpha-\beta_1-2\beta_2}\\
     &&\text{ by Theorem \ref{CPR}}\\
     &=&\sum_{\alpha=0}^{\min\{n,q\}}\sum_{\substack{\beta_1,\beta_2\in\mathbb Z_{\ge0}\\\beta_1+\beta_2\leq s\\\beta_1+2\beta_2\leq n-\alpha}}\sum_{\gamma=0}^{\min\{m,r\}}\binom{n}{\alpha}\binom{q}{\alpha}\binom{s}{\beta_1+\beta_2}\binom{n-\alpha}{\beta_1+2\beta_2}\binom{\beta_1+\beta_2}{\beta_1}\binom{m}{\gamma}\binom{r}{\gamma}\alpha!\frac{(\beta_1+2\beta_2)!}{2^{\beta_2}}\gamma!\\
     &&\times (-1)^{\alpha+\beta_1+\gamma} x_1^{j+p+\alpha+\beta_2+\gamma}x_2^{k+q-\alpha+\beta_1}x_3^{l+r-\gamma}x_4^{m+s-\gamma-\beta_1-\beta_2}x_5^{n+t-\alpha-\beta_1-2\beta_2}
\end{eqnarray*}
\end{proof}

\begin{definition}
    Define the Lie algebra $\mathfrak n_{5,5}$ to be the vector space with ordered basis $\{x_1,x_2,x_3,x_4,x_5\}$ where the only nonzero brackets of basis elements are $[x_5,x_2]=-x_1$, $[x_5,x_3]=-x_2$, and $[x_5,x_4]=-x_3$.
        
\end{definition}

\begin{theorem}\label{n5,5} For all $j,k,l,m,n,p,q,r,s,t\in\mathbb Z_{\ge0}$, we have: $(x_1^jx_2^kx_3^lx_4^mx_5^n)(x_1^px_2^qx_3^rx_4^sx_5^t)$ 

\begin{eqnarray*}
    &=&\sum_{\alpha=0}^{\min\{n,q\}}\sum_{\substack{\beta_1,\beta_2\in\mathbb Z_{\ge0}\\\beta_1+\beta_2\leq r\\\beta_1+2\beta_2\leq n-\alpha}}\sum_{\substack{\gamma_1,\gamma_2,\gamma_3\in\mathbb Z_{\geq0}\\\gamma_1+\gamma_2+\gamma_3\leq s\\\gamma_1+2\gamma_2+3\gamma_3\leq n-\alpha-\beta_1-2\beta_2}}\binom{n}{\alpha}\binom{q}{\alpha}\binom{r}{\beta_1+\beta_2}\binom{n-\alpha}{\beta_1+2\beta_2}\binom{\beta_1+\beta_2}{\beta_1}\\
    &&\times \binom{s}{\gamma_1+\gamma_2+\gamma_3}\binom{n-\alpha-\beta_1-2\beta_2}{\gamma_1+2\gamma_2+3\gamma_3}\alpha!\frac{(\beta_1+2\beta_2)!}{2^{\beta_2}}\frac{(\gamma_1+\gamma_2+\gamma_3)!(\gamma_1+2\gamma_2+3\gamma_3)!}{(2!)^{\gamma_2}(3!)^{\gamma_3}\gamma_3!\gamma_2!\gamma_1!}\\
    &&\times (-1)^{\alpha+\beta_1+\gamma_1+\gamma_3}x_1^{j+p+\alpha+\beta_2+\gamma_3}x_2^{k+q-\alpha+\beta_1+\gamma_2}x_3^{l+r-\beta_1-\beta_2+\gamma_1}x_4^{m+s-\gamma_1-\gamma_2-\gamma_3}x_5^{n+t-\alpha-\beta_1-2\beta_2-\gamma_1-2\gamma_2-3\gamma_3}
\end{eqnarray*}
\end{theorem}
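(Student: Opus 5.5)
Since $x_1$ is central, I first write $(x_1^jx_2^kx_3^lx_4^mx_5^n)(x_1^px_2^qx_3^rx_4^sx_5^t)=x_1^{j+p}x_2^kx_3^lx_4^m(x_5^nx_2^q)x_3^rx_4^sx_5^t$. The plan is to move the $x_5$'s rightward through $x_2^q$, then $x_3^r$, then $x_4^s$. At each stage I use the straightening identity whose bracket pattern matches the subalgebra generated by the current $x_5$-power, the variable being passed, and everything it generates.

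\emph{Stage 1.} Take $a=x_5$, $b=x_2$, $c=[x_5,x_2]=-x_1$. Then $[a,c]=[b,c]=0$, so Theorem \ref{CPR} gives
\[
x_5^nx_2^q=\sum_{\alpha}\binom{n}{\alpha}\binom{q}{\alpha}\alpha!(-1)^\alpha x_1^\alpha x_2^{q-\alpha}x_5^{n-\alpha}.
\]
The $x_2^{q-\alpha}$ moves left past $x_3^lx_4^m$? No: $x_2$ commutes with $x_3,x_4$ but not with $x_5$, and here it sits to the left of $x_5$, so it can indeed be absorbed into $x_2^{k+q-\alpha}$. We are left with $x_3^lx_4^m x_5^{n-\alpha}x_3^rx_4^sx_5^t$. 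Since $x_4$ and $x_3$ commute in $\mathfrak n_{5,5}$, this is $x_3^{l}x_4^m(x_5^{n-\alpha}x_3^r)x_4^sx_5^t$.

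\emph{Stage 2.} Take $a=x_5$, $b=x_3$, $c=[x_5,x_3]=-x_2$, $d=[a,c]=[x_5,-x_2]=x_1$. All other brackets vanish, including $[b,c]=[x_3,-x_2]=0$. So Lemma \ref{[a,b]=c, [a,c]=d} applies with $c^{\beta_1}d^{\beta_2}=(-1)^{\beta_1}x_2^{\beta_1}x_1^{\beta_2}$, giving the $\beta$-sum. The produced $x_2^{\beta_1}$ and $x_1^{\beta_2}$ commute past $x_3^lx_4^m$ to join the front, and $x_3^{r-\beta_1-\beta_2}$ joins $x_3^l$. The remainder is $x_4^m(x_5^{n-\alpha-\beta_1-2\beta_2}x_4^s)x_5^t$.

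\emph{Stage 3.} Take $a=x_5$, $b=x_4$, $c=[x_5,x_4]=-x_3$, $d=[a,c]=x_2$, $g=[a,d]=[x_5,x_2]=-x_1$. We need $[b,c]=[x_4,-x_3]=0$, $[b,d]=[b,g]=[c,d]=[c,g]=[d,g]=0$. All hold in $\mathfrak n_{5,5}$, so Lemma \ref{[a,b]=c, [a,c]=d, [a,d]=g} applies, with
\[
g^{\gamma_3}d^{\gamma_2}c^{\gamma_1}=(-1)^{\gamma_1+\gamma_3}x_1^{\gamma_3}x_2^{\gamma_2}x_3^{\gamma_1}.
\]
Now the new $x_3^{\gamma_1}$ must move left past $x_4^m$ to merge with the $x_3$ block. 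This is legitimate since $[x_4,x_3]=0$ here; by contrast, in $\mathfrak n_{5,2}$ this step needed an extra sum. The new $x_2$ and $x_1$ factors commute with $x_3,x_4$ and go forward. Then $x_5$ exponents combine with $x_5^t$.

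Finally I collect the exponents and coefficients into the stated triple sum, tracking the upper limit $n-\alpha-\beta_1-2\beta_2$ for the $\gamma$-constraint. The main thing to verify carefully is the commutation bookkeeping: that every element passed over is one that commutes. I also need the hypotheses ``all other brackets are $0$'' of each lemma checked against the bracket table of $\mathfrak n_{5,5}$, especially that $x_2,x_3,x_4$ pairwise commute. Beyond that, the remaining work is sign and exponent arithmetic.
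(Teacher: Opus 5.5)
Your proposal is correct and follows the paper's proof step for step. Like the paper, you apply Theorem \ref{CPR} to $x_5^nx_2^q$, then Lemma \ref{[a,b]=c, [a,c]=d} to $x_5^{n-\alpha}x_3^r$ (with $c=-x_2$, $d=x_1$), then Lemma \ref{[a,b]=c, [a,c]=d, [a,d]=g} to $x_5^{n-\alpha-\beta_1-2\beta_2}x_4^s$ (with $c=-x_3$, $d=x_2$, $g=-x_1$), and finally collect terms using that $x_2,x_3,x_4$ pairwise commute in $\mathfrak n_{5,5}$.
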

\begin{proof}
    \begin{eqnarray*}
     &&(x_1^jx_2^kx_3^lx_4^mx_5^n)(x_1^px_2^qx_3^rx_4^sx_5^t)\\
     &=&x_1^{j+p}x_2^kx_3^lx_4^m(x_5^nx_2^q)x_3^rx_4^sx_5^t\\
     &=&x_1^{j+p}x_2^kx_3^lx_4^m\left(\sum_{\alpha=0}^{\min\{n,q\}}\binom{n}{\alpha}\binom{q}{\alpha}\alpha!(-1)^\alpha x_1^\alpha x_2^{q-\alpha}x_5^{n-\alpha}\right)x_3^rx_4^sx_5^t \text{ by Theorem \ref{CPR}}\\
     &=&\sum_{\alpha=0}^{\min\{n,q\}}\binom{n}{\alpha}\binom{q}{\alpha}\alpha!(-1)^\alpha x_1^{j+p+\alpha}x_2^{k+q-\alpha}x_3^lx_4^m (x_5^{n-\alpha} x_3^r)x_4^sx_5^t\\
     &=&\sum_{\alpha=0}^{\min\{n,q\}}\binom{n}{\alpha}\binom{q}{\alpha}\alpha!(-1)^\alpha x_1^{j+p+\alpha}x_2^{k+q-\alpha}x_3^lx_4^m\\
     &&\times \left(\sum_{\substack{\beta_1,\beta_2\in\mathbb Z_{\ge0}\\\beta_1+\beta_2\leq r\\\beta_1+2\beta_2\leq n-\alpha}}\binom{r}{\beta_1+\beta_2}\binom{n-\alpha}{\beta_1+2\beta_2}\binom{\beta_1+\beta_2}{\beta_1}\frac{(\beta_1+2\beta_2)!}{2^{\beta_2}}(-1)^{\beta_1}x_1^{\beta_2}x_2^{\beta_1}x_3^{r-\beta_1-\beta_2}x_5^{n-\alpha-\beta_1-2\beta_2}\right)\\
    &&\times x_4^sx_5^t \text{ by Lemma \ref{[a,b]=c, [a,c]=d}}\\
    &=&\sum_{\alpha=0}^{\min\{n,q\}}\sum_{\substack{\beta_1,\beta_2\in\mathbb Z_{\ge0}\\\beta_1+\beta_2\leq r\\\beta_1+2\beta_2\leq n-\alpha}}\binom{n}{\alpha}\binom{q}{\alpha}\binom{r}{\beta_1+\beta_2}\binom{n-\alpha}{\beta_1+2\beta_2}\binom{\beta_1+\beta_2}{\beta_1}\alpha!\frac{(\beta_1+2\beta_2)!}{2^{\beta_2}}(-1)^{\alpha+\beta_1} \\
    &&\times x_1^{j+p+\alpha+\beta_2}x_2^{k+q-\alpha+\beta_1}x_3^{l+r-\beta_1-\beta_2}x_4^m(x_5^{n-\alpha-\beta_1-2\beta_2}x_4^s)x_5^t\\
    &=&\sum_{\alpha=0}^{\min\{n,q\}}\sum_{\substack{\beta_1,\beta_2\in\mathbb Z_{\ge0}\\\beta_1+\beta_2\leq r\\\beta_1+2\beta_2\leq n-\alpha}}\binom{n}{\alpha}\binom{q}{\alpha}\binom{r}{\beta_1+\beta_2}\binom{n-\alpha}{\beta_1+2\beta_2}\binom{\beta_1+\beta_2}{\beta_1}\alpha!\frac{(\beta_1+2\beta_2)!}{2^{\beta_2}}(-1)^{\alpha+\beta_1} \\
    &&\times x_1^{j+p+\alpha+\beta_2}x_2^{k+q-\alpha+\beta_1}x_3^{l+r-\beta_1-\beta_2}x_4^m\bigg(\sum_{\substack{\gamma_1,\gamma_2,\gamma_3\in\mathbb Z_{\geq0}\\\gamma_1+\gamma_2+\gamma_3\leq s\\\gamma_1+2\gamma_2+3\gamma_3\leq n-\alpha-\beta_1-2\beta_2}}\binom{s}{\gamma_1+\gamma_2+\gamma_3}\binom{n-\alpha-\beta_1-2\beta_2}{\gamma_1+2\gamma_2+3\gamma_3}\\
    &&\times \frac{(\gamma_1+\gamma_2+\gamma_3)!(\gamma_1+2\gamma_2+3\gamma_3)!}{(2!)^{\gamma_2}(3!)^{\gamma_3}\gamma_3!\gamma_2!\gamma_1!}(-1)^{\gamma_1+\gamma_3}x_1^{\gamma_3}x_2^{\gamma_2}x_3^{\gamma_1}x_4^{s-\gamma_1-\gamma_2-\gamma_3}x_5^{n-\alpha-\beta_1-2\beta_2-\gamma_1-2\gamma_2-3\gamma_3}\bigg)x_5^t \\
    &&\text{ by Lemma \ref{[a,b]=c, [a,c]=d, [a,d]=g}}\\
    &=&\sum_{\alpha=0}^{\min\{n,q\}}\sum_{\substack{\beta_1,\beta_2\in\mathbb Z_{\ge0}\\\beta_1+\beta_2\leq r\\\beta_1+2\beta_2\leq n-\alpha}}\sum_{\substack{\gamma_1,\gamma_2,\gamma_3\in\mathbb Z_{\geq0}\\\gamma_1+\gamma_2+\gamma_3\leq s\\\gamma_1+2\gamma_2+3\gamma_3\leq n-\alpha-\beta_1-2\beta_2}}\binom{n}{\alpha}\binom{q}{\alpha}\binom{r}{\beta_1+\beta_2}\binom{n-\alpha}{\beta_1+2\beta_2}\binom{\beta_1+\beta_2}{\beta_1}\\
    &&\times \binom{s}{\gamma_1+\gamma_2+\gamma_3}\binom{n-\alpha-\beta_1-2\beta_2}{\gamma_1+2\gamma_2+3\gamma_3}\alpha!\frac{(\beta_1+2\beta_2)!}{2^{\beta_2}}\frac{(\gamma_1+\gamma_2+\gamma_3)!(\gamma_1+2\gamma_2+3\gamma_3)!}{(2!)^{\gamma_2}(3!)^{\gamma_3}\gamma_3!\gamma_2!\gamma_1!}\\
    &&\times (-1)^{\alpha+\beta_1+\gamma_1+\gamma_3}x_1^{j+p+\alpha+\beta_2+\gamma_3}x_2^{k+q-\alpha+\beta_1+\gamma_2}x_3^{l+r-\beta_1-\beta_2+\gamma_1}x_4^{m+s-\gamma_1-\gamma_2-\gamma_3}x_5^{n+t-\alpha-\beta_1-2\beta_2-\gamma_1-2\gamma_2-3\gamma_3}
\end{eqnarray*}
\end{proof}

\begin{definition}
    Define the Lie algebra $\mathfrak n_{5,6}$ to be the vector space with ordered basis $\{x_1,x_2,x_3,x_4,x_5\}$, where the only nonzero brackets of basis elements are $[x_4,x_3]=-x_1$, $[x_5,x_2]=-x_1$, $[x_5,x_3]=-x_2$, and $[x_5,x_4]=-x_3$.
        
\end{definition}

\begin{theorem}\label{n5,6} For all $j,k,l,m,n,p,q,r,s,t\in\mathbb Z_{\ge0}$, we have: $(x_1^jx_2^kx_3^lx_4^mx_5^n)(x_1^px_2^qx_3^rx_4^sx_5^t)$

\begin{eqnarray*}
    &=&\sum_{\alpha=0}^{\min\{n,q\}}\sum_{\substack{\beta_1,\beta_2\in\mathbb Z_{\ge0}\\\beta_1+\beta_2\leq r\\\beta_1+2\beta_2\leq n-\alpha}}\sum_{\substack{\gamma_1,\gamma_2,\gamma_3,\gamma_4\in\mathbb Z_{\ge0}\\\gamma_1+\gamma_2+\gamma_3+2\gamma_4\le s\\\gamma_1+2\gamma_2+3\gamma_3+\gamma_4\le n-\alpha-\beta_1-2\beta_2}}\sum_{\delta=0}^{\min\{m,r-\beta_1-\beta_2+\gamma_1\}}\binom{n}{\alpha}\binom{q}{\alpha}\binom{r}{\beta_1+\beta_2}\binom{n-\alpha}{\beta_1+2\beta_2}\\
    &&\times \binom{\beta_1+\beta_2}{\beta_1}\binom{s}{\gamma_1+\gamma_2+\gamma_3+2\gamma_4}\binom{n-\alpha-\beta_1-2\beta_2}{\gamma_1+2\gamma_2+3\gamma_3+\gamma_4}\binom{m}{\delta}\binom{r-\beta_1-\beta_2+\gamma_1}{\delta}\alpha!\frac{(\beta_1+2\beta_2)!}{2^{\beta_2}}\\
    &&\times \frac{(\gamma_1+\gamma_2+\gamma_3+2\gamma_4)!(\gamma_1+2\gamma_2+3\gamma_3+\gamma_4)!}{(2!)^{\gamma_2+\gamma_4}(3!)^{\gamma_3}\gamma_4!\gamma_3!\gamma_2!\gamma_1!}\delta!(-1)^{\alpha+\beta_1+\gamma_1+\gamma_3+\delta}\\
    &&\times x_1^{j+p+\alpha+\beta_2+\gamma_4+\gamma_3+\delta}x_2^{k+q-\alpha+\beta_1+\gamma_2}x_3^{l+r-\beta_1-\beta_2+\gamma_1-\delta}x_4^{m+s-\delta-\gamma_1-\gamma_2-\gamma_3-2\gamma_4}x_5^{n+t-\alpha-\beta_1-2\beta_2-\gamma_1-2\gamma_2-3\gamma_3-\gamma_4} 
\end{eqnarray*}
\end{theorem}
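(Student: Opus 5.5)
The proof will follow the same successive-straightening scheme used for Theorems \ref{n5,5} and \ref{n5,4}: push the powers of $x_5$ from the second factor's left-hand neighbour rightward past $x_2^q$, then $x_3^r$, then $x_4^s$, and finally straighten one leftover $x_4^mx_3^{(\cdot)}$ product. Since $x_1$ is central, the first step is to write the product as
\[
x_1^{j+p}x_2^kx_3^lx_4^m\,(x_5^nx_2^q)\,x_3^rx_4^sx_5^t .
\]
With $a=x_5$, $b=x_2$, $c=-x_1$, we have $[a,b]=c$, and $c$ commutes with $a$ and $b$. Theorem \ref{CPR} therefore gives the $\alpha$-sum with factor $(-1)^\alpha x_1^\alpha x_2^{q-\alpha}x_5^{n-\alpha}$. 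Because $x_2$ commutes with $x_3,x_4$ and $x_1$ is central, $x_2^{q-\alpha}$ slides left to join $x_2^k$. What remains is $x_3^lx_4^m\,(x_5^{n-\alpha}x_3^r)\,x_4^sx_5^t$.

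Next I will apply Lemma \ref{[a,b]=c, [a,c]=d} with $a=x_5$, $b=x_3$, $c=-x_2$, $d=x_1$. This needs $[x_5,x_3]=-x_2$ and $[x_5,-x_2]=x_1$, and the other brackets $[x_5,x_1]$, $[x_3,x_2]$, $[x_3,x_1]$, $[x_2,x_1]$ all vanish in $\mathfrak n_{5,6}$. This produces the $\beta$-sum with sign $(-1)^{\beta_1}$ and monomial $x_1^{\beta_2}x_2^{\beta_1}x_3^{r-\beta_1-\beta_2}x_5^{n-\alpha-\beta_1-2\beta_2}$. The $x_1,x_2$ factors move to the front, since $x_2$ commutes with $x_3$ and $x_4$. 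The remaining middle is $x_3^l\,x_4^m x_3^{r-\beta_1-\beta_2}\,(x_5^{n-\alpha-\beta_1-2\beta_2}x_4^s)\,x_5^t$.

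The key step, and the main obstacle, is straightening $x_5^{N}x_4^s$ with $N=n-\alpha-\beta_1-2\beta_2$. Here $x_4$ does not commute with $x_3$, so the lemma with the extra bracket is needed. I will use Lemma \ref{[a,b]=c, [a,c]=d, [a,d]=g, [b,c]=-g} with
\[
a=x_5,\quad b=x_4,\quad c=-x_3,\quad d=x_2,\quad g=-x_1 .
\]
The brackets to verify are:
\begin{itemize}
\item $[a,b]=[x_5,x_4]=-x_3=c$;
\item $[a,c]=-[x_5,x_3]=x_2=d$;
\item $[a,d]=[x_5,x_2]=-x_1=g$;
\item $[b,c]=-[x_4,x_3]=x_1=-g$.
\end{itemize}
All other brackets among $x_5,x_4,x_3,x_2,x_1$ vanish. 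The sign bookkeeping must be done carefully: $(-g)^{k_4}=x_1^{\gamma_4}$, $g^{k_3}=(-1)^{\gamma_3}x_1^{\gamma_3}$, $d^{k_2}=x_2^{\gamma_2}$, and $c^{k_1}=(-1)^{\gamma_1}x_3^{\gamma_1}$. This yields the overall sign $(-1)^{\gamma_1+\gamma_3}$ and the exponents $x_1^{\gamma_3+\gamma_4}$, $x_2^{\gamma_2}$, $x_3^{\gamma_1}$, as in the stated formula.

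The $x_1$ and $x_2$ powers again move to the front. The factor $x_3^{\gamma_1}$, however, sits to the right of $x_4^m$, where it merges with $x_3^{r-\beta_1-\beta_2}$. The last step is therefore to straighten $x_4^mx_3^{r-\beta_1-\beta_2+\gamma_1}$ by Theorem \ref{CPR} with $a=x_4$, $b=x_3$, $c=-x_1$. This gives the $\delta$-sum with $(-1)^\delta\delta!\binom{m}{\delta}\binom{r-\beta_1-\beta_2+\gamma_1}{\delta}x_1^\delta x_3^{\,r-\beta_1-\beta_2+\gamma_1-\delta}x_4^{m-\delta}$. Collecting the four nested sums, their coefficients, the sign $(-1)^{\alpha+\beta_1+\gamma_1+\gamma_3+\delta}$, and the exponents then gives exactly the displayed formula.
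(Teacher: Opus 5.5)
Your proposal is correct and follows the paper's proof step for step. You apply Theorem \ref{CPR} to $x_5^nx_2^q$ and Lemma \ref{[a,b]=c, [a,c]=d} to $x_5^{n-\alpha}x_3^r$. You then apply Lemma \ref{[a,b]=c, [a,c]=d, [a,d]=g, [b,c]=-g} to $x_5^{N}x_4^s$ with the same substitution $a=x_5$, $b=x_4$, $c=-x_3$, $d=x_2$, $g=-x_1$, and finish with Theorem \ref{CPR} on $x_4^mx_3^{r-\beta_1-\beta_2+\gamma_1}$; your bracket checks and the sign $(-1)^{\gamma_1+\gamma_3}$ agree with the paper's.
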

\begin{proof}
    \begin{eqnarray*}
     &&(x_1^jx_2^kx_3^lx_4^mx_5^n)(x_1^px_2^qx_3^rx_4^sx_5^t)\\
     &=&x_1^{j+p}x_2^kx_3^lx_4^m(x_5^nx_2^q)x_3^rx_4^sx_5^t\\
     &=&x_1^{j+p}x_2^kx_3^lx_4^m\left(\sum_{\alpha=0}^{\min\{n,q\}}\binom{n}{\alpha}\binom{q}{\alpha}\alpha!(-1)^{\alpha}x_1^{\alpha}x_2^{q-\alpha}x_5^{n-\alpha}\right)x_3^rx_4^sx_5^t\text{ by Theorem \ref{CPR}}\\
     &=&\sum_{\alpha=0}^{\min\{n,q\}}\binom{n}{\alpha}\binom{q}{\alpha}\alpha!(-1)^{\alpha}x_1^{j+p+\alpha}x_2^{k+q-\alpha}x_3^lx_4^m(x_5^{n-\alpha}x_3^r)x_4^sx_5^t\\
     &=&\sum_{\alpha=0}^{\min\{n,q\}}\binom{n}{\alpha}\binom{q}{\alpha}\alpha!(-1)^{\alpha}x_1^{j+p+\alpha}x_2^{k+q-\alpha}x_3^lx_4^m\Bigg(\sum_{\substack{\beta_1,\beta_2\in\mathbb Z_{\ge0}\\\beta_1+\beta_2\leq r\\\beta_1+2\beta_2\leq n-\alpha}}\binom{r}{\beta_1+\beta_2}\binom{n-\alpha}{\beta_1+2\beta_2}\binom{\beta_1+\beta_2}{\beta_1}\\
     &&\times \frac{(\beta_1+2\beta_2)!}{2^{\beta_2}}(-1)^{\beta_1}x_1^{\beta_2}x_2^{\beta_1}x_3^{r-\beta_1-\beta_2}x_5^{n-\alpha-\beta_1-2\beta_2}\Bigg)x_4^sx_5^t\text{ by Lemma \ref{[a,b]=c, [a,c]=d}}\\
     &=&\sum_{\alpha=0}^{\min\{n,q\}}\sum_{\substack{\beta_1,\beta_2\in\mathbb Z_{\ge0}\\\beta_1+\beta_2\leq r\\\beta_1+2\beta_2\leq n-\alpha}}\binom{n}{\alpha}\binom{q}{\alpha}\binom{r}{\beta_1+\beta_2}\binom{n-\alpha}{\beta_1+2\beta_2}\binom{\beta_1+\beta_2}{\beta_1}\alpha!\frac{(\beta_1+2\beta_2)!}{2^{\beta_2}}(-1)^{\alpha+\beta_1}\\
     &&\times x_1^{j+p+\alpha+\beta_2}x_2^{k+q-\alpha+\beta_1}x_3^lx_4^mx_3^{r-\beta_1-\beta_2}\left(x_5^{n-\alpha-\beta_1-2\beta_2} x_4^s\right)x_5^t\\
     &=&\sum_{\alpha=0}^{\min\{n,q\}}\sum_{\substack{\beta_1,\beta_2\in\mathbb Z_{\ge0}\\\beta_1+\beta_2\leq r\\\beta_1+2\beta_2\leq n-\alpha}}\binom{n}{\alpha}\binom{q}{\alpha}\binom{r}{\beta_1+\beta_2}\binom{n-\alpha}{\beta_1+2\beta_2}\binom{\beta_1+\beta_2}{\beta_1}\alpha!\frac{(\beta_1+2\beta_2)!}{2^{\beta_2}}(-1)^{\alpha+\beta_1}\\
     &&\times x_1^{j+p+\alpha+\beta_2}x_2^{k+q-\alpha+\beta_1}x_3^lx_4^mx_3^{r-\beta_1-\beta_2}\Bigg(\sum_{\substack{\gamma_1,\gamma_2,\gamma_3,\gamma_4\in\mathbb Z_{\ge0}\\\gamma_1+\gamma_2+\gamma_3+2\gamma_4\le s\\\gamma_1+2\gamma_2+3\gamma_3+\gamma_4\le n-\alpha-\beta_1-2\beta_2}}\binom{s}{\gamma_1+\gamma_2+\gamma_3+2\gamma_4}\\
     &&\times \binom{n-\alpha-\beta_1-2\beta_2}{\gamma_1+2\gamma_2+3\gamma_3+\gamma_4}\frac{(\gamma_1+\gamma_2+\gamma_3+2\gamma_4)!(\gamma_1+2\gamma_2+3\gamma_3+\gamma_4)!}{(2!)^{\gamma_2+\gamma_4}(3!)^{\gamma_3}\gamma_4!\gamma_3!\gamma_2!\gamma_1!}\\
    &&\times (-1)^{\gamma_1+\gamma_3}x_1^{\gamma_4+\gamma_3}x_2^{\gamma_2}x_3^{\gamma_1}x_4^{s-\gamma_1-\gamma_2-\gamma_3-2\gamma_4}x_5^{n-\alpha-\beta_1-2\beta_2-\gamma_1-2\gamma_2-3\gamma_3-\gamma_4}\Bigg)x_5^t \text{ by Lemma \ref{[a,b]=c, [a,c]=d, [a,d]=g, [b,c]=-g}}\\
    &=&\sum_{\alpha=0}^{\min\{n,q\}}\sum_{\substack{\beta_1,\beta_2\in\mathbb Z_{\ge0}\\\beta_1+\beta_2\leq r\\\beta_1+2\beta_2\leq n-\alpha}}\sum_{\substack{\gamma_1,\gamma_2,\gamma_3,\gamma_4\in\mathbb Z_{\ge0}\\\gamma_1+\gamma_2+\gamma_3+2\gamma_4\le s\\\gamma_1+2\gamma_2+3\gamma_3+\gamma_4\le n-\alpha-\beta_1-2\beta_2}}\binom{n}{\alpha}\binom{q}{\alpha}\binom{r}{\beta_1+\beta_2}\binom{n-\alpha}{\beta_1+2\beta_2}\binom{\beta_1+\beta_2}{\beta_1}\\
    &&\times \binom{s}{\gamma_1+\gamma_2+\gamma_3+2\gamma_4}\binom{n-\alpha-\beta_1-2\beta_2}{\gamma_1+2\gamma_2+3\gamma_3+\gamma_4}\alpha!\frac{(\beta_1+2\beta_2)!}{2^{\beta_2}}\\
    &&\times \frac{(\gamma_1+\gamma_2+\gamma_3+2\gamma_4)!(\gamma_1+2\gamma_2+3\gamma_3+\gamma_4)!}{(2!)^{\gamma_2+\gamma_4}(3!)^{\gamma_3}\gamma_4!\gamma_3!\gamma_2!\gamma_1!}(-1)^{\alpha+\beta_1+\gamma_1+\gamma_3}x_1^{j+p+\alpha+\beta_2+\gamma_4+\gamma_3}x_2^{k+q-\alpha+\beta_1+\gamma_2}x_3^l\\
    &&\times (x_4^mx_3^{r-\beta_1-\beta_2+\gamma_1})x_4^{s-\gamma_1-\gamma_2-\gamma_3-2\gamma_4}x_5^{n+t-\alpha-\beta_1-2\beta_2-\gamma_1-2\gamma_2-3\gamma_3-\gamma_4} \\
    &=&\sum_{\alpha=0}^{\min\{n,q\}}\sum_{\substack{\beta_1,\beta_2\in\mathbb Z_{\ge0}\\\beta_1+\beta_2\leq r\\\beta_1+2\beta_2\leq n-\alpha}}\sum_{\substack{\gamma_1,\gamma_2,\gamma_3,\gamma_4\in\mathbb Z_{\ge0}\\\gamma_1+\gamma_2+\gamma_3+2\gamma_4\le s\\\gamma_1+2\gamma_2+3\gamma_3+\gamma_4\le n-\alpha-\beta_1-2\beta_2}}\binom{n}{\alpha}\binom{q}{\alpha}\binom{r}{\beta_1+\beta_2}\binom{n-\alpha}{\beta_1+2\beta_2}\binom{\beta_1+\beta_2}{\beta_1}\\
    &&\times \binom{s}{\gamma_1+\gamma_2+\gamma_3+2\gamma_4}\binom{n-\alpha-\beta_1-2\beta_2}{\gamma_1+2\gamma_2+3\gamma_3+\gamma_4}\alpha!\frac{(\beta_1+2\beta_2)!}{2^{\beta_2}}\\
    &&\times \frac{(\gamma_1+\gamma_2+\gamma_3+2\gamma_4)!(\gamma_1+2\gamma_2+3\gamma_3+\gamma_4)!}{(2!)^{\gamma_2+\gamma_4}(3!)^{\gamma_3}\gamma_4!\gamma_3!\gamma_2!\gamma_1!}(-1)^{\alpha+\beta_1+\gamma_1+\gamma_3}x_1^{j+p+\alpha+\beta_2+\gamma_4+\gamma_3}x_2^{k+q-\alpha+\beta_1+\gamma_2}x_3^l\\
    &&\times \left(\sum_{\delta=0}^{\min\{m,r-\beta_1-\beta_2+\gamma_1\}}\binom{m}{\delta}\binom{r-\beta_1-\beta_2+\gamma_1}{\delta}\delta!(-1)^{\delta}x_1^{\delta}x_3^{r-\beta_1-\beta_2+\gamma_1-\delta}x_4^{m-\delta}\right)\\
    &&\times x_4^{s-\gamma_1-\gamma_2-\gamma_3-2\gamma_4}x_5^{n+t-\alpha-\beta_1-2\beta_2-\gamma_1-2\gamma_2-3\gamma_3-\gamma_4} \text{ by Theorem \ref{CPR}}\\
    &=&\sum_{\alpha=0}^{\min\{n,q\}}\sum_{\substack{\beta_1,\beta_2\in\mathbb Z_{\ge0}\\\beta_1+\beta_2\leq r\\\beta_1+2\beta_2\leq n-\alpha}}\sum_{\substack{\gamma_1,\gamma_2,\gamma_3,\gamma_4\in\mathbb Z_{\ge0}\\\gamma_1+\gamma_2+\gamma_3+2\gamma_4\le s\\\gamma_1+2\gamma_2+3\gamma_3+\gamma_4\le n-\alpha-\beta_1-2\beta_2}}\sum_{\delta=0}^{\min\{m,r-\beta_1-\beta_2+\gamma_1\}}\binom{n}{\alpha}\binom{q}{\alpha}\binom{r}{\beta_1+\beta_2}\binom{n-\alpha}{\beta_1+2\beta_2}\\
    &&\times \binom{\beta_1+\beta_2}{\beta_1}\binom{s}{\gamma_1+\gamma_2+\gamma_3+2\gamma_4}\binom{n-\alpha-\beta_1-2\beta_2}{\gamma_1+2\gamma_2+3\gamma_3+\gamma_4}\binom{m}{\delta}\binom{r-\beta_1-\beta_2+\gamma_1}{\delta}\alpha!\frac{(\beta_1+2\beta_2)!}{2^{\beta_2}}\\
    &&\times \frac{(\gamma_1+\gamma_2+\gamma_3+2\gamma_4)!(\gamma_1+2\gamma_2+3\gamma_3+\gamma_4)!}{(2!)^{\gamma_2+\gamma_4}(3!)^{\gamma_3}\gamma_4!\gamma_3!\gamma_2!\gamma_1!}\delta!(-1)^{\alpha+\beta_1+\gamma_1+\gamma_3+\delta}\\
    &&\times x_1^{j+p+\alpha+\beta_2+\gamma_4+\gamma_3+\delta}x_2^{k+q-\alpha+\beta_1+\gamma_2}x_3^{l+r-\beta_1-\beta_2+\gamma_1-\delta}x_4^{m+s-\delta-\gamma_1-\gamma_2-\gamma_3-2\gamma_4}x_5^{n+t-\alpha-\beta_1-2\beta_2-\gamma_1-2\gamma_2-3\gamma_3-\gamma_4} 
\end{eqnarray*}
\end{proof}

\bibliographystyle{plain}
\bibliography{references}

{\footnotesize  
\medskip
\medskip
\vspace*{1mm} 
 
\noindent {\it Samuel Chamberlin}\\  
Park University\\
8700 River Park Drive\\
Parlville, MO\\
E-mail: {\tt schamberlin@park.edu}\\ \\  

\noindent {\it Emmerson Taylor}\\  
Park University \\
8700 River Park Drive\\
Parkville, MO \\
E-mail: {\tt 1710985@park.edu}\\ \\




\end{document}